\newcommand{\NextVer}[1]{}
\begin{document}
	
	\author[Aizenbud]{Avraham Aizenbud}
	\address{Avraham Aizenbud,
		Faculty of Mathematical Sciences,
		Weizmann Institute of Science,
		76100
		Rehovot, Israel}
	\email{aizenr@gmail.com}
	\urladdr{https://www.wisdom.weizmann.ac.il/~aizenr/}

\author{Uri Bader}
\address{Uri Bader, Faculty of Mathematics and Computer Science, Weizmann Institute of Science, Israel.}
\email{uri.bader@gmail.com}
\urladdr{https://www.weizmann.ac.il/math/uribader/home}

	\date{\today}
	
	\keywords{Group actions, Von Neumann regular ring, Zero Dimensional Schemes, Geometrization, Algebraic Representation of Ergodic Actions}
	\subjclass{14L30, 22D40}
	%
	%
	%
	%
	%
	%
	%
	%
	

\title{Geometric representations of group actions}
\maketitle
\begin{abstract} 
We study equivariant morphisms from zero dimensional schemes to varieties
and show that, under suitable assumptions, 
all such morphisms factor via a canonical one.
We relate the above to Algebraic Representations of Ergodic Actions.
\end{abstract}


\section{Introduction}

Fix a field $k$ and a $k$-algebraic group $\bfG$.
In this paper, by a variety we mean a geometrically reduced separated $k$-scheme of finite type (not necessarily irreducible). We consider the category $\mathcal{C}$ of varieties endowed with a $k$-morphic action by $\bfG$, 
and their $\bfG$-equivariant $k$-morphisms.
Assume given an object $\bfX_0$ in $\mathcal{C}$ which is $\bfG$-transitive and
consider the algebra of $k$-rational functions, $K=k(\bfX_0)$,
on which $G=\bfG(k)$ acts by $k$-automorphisms.
One might ask how to reconstruct $\bfX_0$, given the the $k$-algebra $K$ along with the $G$-action.
Noting that the natural morphism of $k$-schemes $\phi:\Spec(K)\to \bfX_0$ is $G$-equivariant
we suggest the following answer.

\begin{introprop} \label{prop:field}
Assume that $G$ is Zariski dense in $\bfG$.
Then the functor $F_K:\mathcal{C}\to \mbox{Sets}$
which assigns to $\bfX$
the set of $G$-equivariant morphisms of $k$-schemes $\Spec(K)\to \bfX$
is represented by $\bfX_0$ and $\phi$ corresponds to the identity morphism $\bfX_0\to \bfX_0$.
\end{introprop}

It is natural to widen up the scope as follows.
We fix a group $\Gamma$ and a group homomorphism $\Gamma\to G$.
Assume given a $k$-scheme $\bfY$ (not necessarily of finite type) endowed with an action of $\Gamma$ by $k$-automorphisms
and consider the functor $F_\bfY:\mathcal{C}\to \mbox{Sets}$
which assigns to $\bfX$ the set of $\Gamma$-equivariant morphisms of $k$-schemes $\bfY\to \bfX$.
If this functor is representable we will say that \emph{$\bfY$ is representable in $\cC$}.
In case $\bfY=\Spec(L)$, for some $k$-algebra $L$ endowed with an action of $\Gamma$,
we will write $F_L$ for $F_\bfY$ and say that \emph{$L$ is representable in $\cC$} if $\bfY$ is.
We are led to consider the following general problem.

\begin{introproblem} \label{ques:scheme}
Find natural criteria for representability in $\cC$ of schemes and algebras on which $\Gamma$ acts.
\end{introproblem}

We will see in the course of proof of Proposition~\ref{prop:field} that the algebra $K=k(\bfX_0)$ considered in Proposition~\ref{prop:field}
is zero dimensional and geometrically reduced.
Other prominent examples of zero dimensional and geometrically reduced algebras are algebras of $k$-valued measurable functions on measurable spaces, 
where $k$ is a local field.
The question of representability of such algebras of measurable functions 
is closely related to the study of \emph{algebraic representations of ergodic actions},
considered in \cite{BF20} and \cite{BaderFurman}.
In fact, this study was our initial motivation for considering Problem~\ref{ques:scheme}.
The following theorem, which is the main result of this paper,
completely solves this problem, under the zero dimensionality and geometrically reducedness assumptions.
Here we denote by $\cO(\bfY)$ the $k$-algebra of
global regular functions on $\bfY$ and by $L^\Gamma$ the subalgebra of $\Gamma$-invariants in $L$.

\begin{introthm} \label{thm:E}
Assume $\bfY$ is a zero dimensional and geometrically reduced $k$-scheme
endowed with an action of $\Gamma$.
Then the following are equivalent.
\begin{itemize}
\item $\bfY$ is representable in $\cC$.
\item $\cO(\bfY)$ is representable in $\cC$.
\item $\cO(\bfY)^\Gamma$ is finite over $k$.
\item $\cO(\bfY)^\Gamma$ is a finite sum of finite  separable field extensions of $k$. 
\end{itemize}
Moreover, if $\cO(\bfY)$ is representable by
$\Spec(\cO(\bfY)) \to \bfX$ then $\bfY$ is represented by the composition with the canonical map,
$\bfY\to \Spec(\cO(\bfY)) \to \bfX$.

In particular, for a zero dimensional and geometrically reduced $k$-algebra $L$,
$L$ is representable in $\cC$ iff $L^\Gamma$ is finite over $k$ iff $L^\Gamma$ is a finite sum of finite  separable field extensions of $k$.
\end{introthm}

In \S\ref{sec:zd} we will give some preliminaries regarding zero dimensional rings and algebras.
A fundamental concept to this paper is a \emph{geometrization} of a scheme
which will be introduced and discussed in \S\ref{sec:geo-cat} and will be extended to an equivariant setting in \S\ref{sec:geo-eq}.
The proofs of Theorem~\ref{thm:E} and Proposition~\ref{prop:field} will be given in \S\ref{sec:proofs}.
Finally, in \S\ref{sec:meas} we will relate all of the above to algebraic representations of ergodic actions, considered in \cite{BF20} and \cite{BaderFurman}.

%
%
%

\subsection{Acknowledgements} 
A.A. was partially supported by ISF grant number 249/17 and a Minerva
foundation grant. 
U.B was partially supported by ISF Moked grant number 2919/19.

\section{Zero dimensionality} \label{sec:zd}

In this paper, and in particular in this section, all rings are commutative and unital.

\subsection{Zero dimensional rings}

The following two lemmas, which are taken from \cite{BrRi}, are fundamental to our analysis of zero dimensional algebras.
For convenience, we include their short proofs.

\begin{lemma} \label{lem:usup}
Let $L$ be a ring. 
Given $x\in L$, there exists at most one idempotent $e\in L$ such that  
$ex$ is a unit in $eL$ and $(1-e)x$ is nilpotent in $L$.
\end{lemma}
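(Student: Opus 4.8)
The plan is to show that any two idempotents $e,f$ satisfying the two conditions for the same $x$ must coincide, exploiting commutativity of $L$ throughout. The guiding idea is that each condition pins down how the idempotent interacts with high powers of $x$: the nilpotency condition controls $1-e$, while the invertibility condition controls $e$.

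First I would translate the nilpotency condition into a cleaner identity. Since $1-e$ is idempotent and $L$ is commutative, we have $((1-e)x)^n = (1-e)^n x^n = (1-e)x^n$, so $(1-e)x$ being nilpotent forces $(1-e)x^n = 0$ for some $n$; equivalently $e x^n = x^n$, and hence $e x^N = x^N$ for all $N\ge n$. Applying the same reasoning to $f$, I can choose a single exponent $N$ with $e x^N = x^N = f x^N$ simultaneously.

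Next I would use the invertibility condition. Since $ex$ is a unit in the ring $eL$ (whose identity is $e$), its power $(ex)^N = e x^N$ is again a unit in $eL$; letting $u\in eL$ denote its inverse, we have $u\,(e x^N) = e$. Combining this with $e x^N = x^N$ gives $u x^N = e$, which expresses $e$ explicitly as a multiple of $x^N$. Multiplying this identity by $f$ and invoking $f x^N = x^N$ yields $ef = u x^N = e$. By the symmetric argument, swapping the roles of $e$ and $f$, I obtain $fe = f$, and commutativity gives $ef = fe$, whence $e = f$.

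I expect the only point requiring genuine care to be the first step: the reduction from ``$(1-e)x$ nilpotent'' to ``$(1-e)x^n = 0$'' relies essentially on the idempotency of $1-e$ together with commutativity, and one must secure a uniform exponent $N$ valid for both $e$ and $f$ at once. The invertibility manipulation is then a short formal computation.
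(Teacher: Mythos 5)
Your proof is correct, and it follows essentially the same route as the paper's: both arguments combine the unit condition for one idempotent (to write $e$ as a multiple of a power of $x$) with the nilpotency condition for the other (to absorb that power), yielding $ef=e$ and, by symmetry, $fe=f$, hence $e=f$ by commutativity. The paper packages this as a single computation $e=e^n=(exy)^n=e'(exy)^n$, but the underlying mechanism is identical to yours.
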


Given $x\in L$, an idempotent $e$ satisfying the properties above is said to be \emph{the supporting idempotent of $x$}.

\begin{proof}
If $e$ and $e'$ are both supporting idempotents for $x$ then there exist $y\in L$ and $n\in \mathbb{N}$ such that $e=exy$ 
and $(1-e')x^n=0$.
It follows that
\[ e=e^n=(exy)^n= e'(exy)^n+(1-e')(exy)^n=e'(exy)^n=e'e, \]
thus $e=e'e$. Symmetrically, we get that $e'=ee'$, thus $e=e'$.
\end{proof}

If $x\in L$ has a supporting idempotent we will denote it 
by $e_x$.

\begin{lemma} \label{lem:zdidem}
A ring $L$ is a zero dimensional
iff every $x\in L$ has a supporting idempotent.
\end{lemma}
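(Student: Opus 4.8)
The plan is to prove both implications of the ``iff'' characterization of zero dimensionality in terms of supporting idempotents.

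\medskip

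\textbf{The easier direction: supporting idempotents imply dimension zero.} Suppose every $x \in L$ has a supporting idempotent. Recall that $L$ is zero dimensional precisely when every prime ideal is maximal. So let $\mathfrak{p} \subset L$ be a prime ideal; I want to show it is maximal, equivalently that the domain $L/\mathfrak{p}$ is a field. Take any $x \in L$ with $x \notin \mathfrak{p}$, and let $e = e_x$ be its supporting idempotent. In the quotient domain $L/\mathfrak{p}$, idempotents must be $0$ or $1$ (since $\bar e(\bar e - 1) = 0$). The condition that $(1-e)x$ is nilpotent forces $(1-\bar e)\bar x = 0$ in $L/\mathfrak{p}$; since $\bar x \neq 0$ and the quotient is a domain, $\bar e = 1$. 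Then the condition that $ex$ is a unit in $eL$ means there is $y$ with $exy = e$, which upon reducing mod $\mathfrak{p}$ gives $\bar x \bar y = \bar e = 1$, so $\bar x$ is invertible. Thus every nonzero element of $L/\mathfrak{p}$ is a unit, so $L/\mathfrak{p}$ is a field and $\mathfrak{p}$ is maximal.

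\medskip

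\textbf{The harder direction: dimension zero implies supporting idempotents exist.} Now assume $L$ is zero dimensional and fix $x \in L$; I must produce the idempotent $e_x$. The key structural fact I would invoke is that in a zero dimensional ring the nilradical equals the Jacobson radical and, more usefully, that every element generates a principal ideal whose powers stabilize in a strong sense. The concrete approach: consider the descending chain of ideals $(x) \supseteq (x^2) \supseteq \cdots$. In a zero dimensional ring one can show this stabilizes up to the relevant data, yielding an equation of the form $x^n = x^{n+1} u$ for some $u \in L$ and some $n$. From such an equation I expect to extract the idempotent directly: the element $e := (xu)^n = x^n u^n$ should be idempotent (one checks $e^2 = x^{2n}u^{2n}$ and uses $x^n = x^{n+1}u$ repeatedly to collapse this back to $e$), it should satisfy $ex = $ unit in $eL$ (with inverse built from $u$), and $(1-e)x$ should be nilpotent (indeed $(1-e)x^n$ should vanish, using $x^n = e\,x^n$). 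The main obstacle is justifying the starting equation $x^n = x^{n+1}u$ from zero dimensionality alone; this is where I would either cite a standard characterization (a ring is zero dimensional iff for every $x$ there exist $n$ and $u$ with $x^n = x^{n+1}u$, i.e. $L$ is von Neumann regular modulo its nilradical, which is the paper's background theme) or prove it by localizing: for each prime/maximal $\mathfrak{p}$ the image of $x$ in $L_\mathfrak{p}$ is either a unit or nilpotent (since $\dim L_\mathfrak{p} = 0$ makes $L_\mathfrak{p}$ local zero dimensional, hence its maximal ideal is nil), and then glue this local dichotomy into the global equation.

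\medskip

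\textbf{Remark on strategy.} I anticipate the whole proof being short once the algebraic identity $x^n = x^{n+1}u$ is in hand, since the uniqueness half is already given by Lemma~\ref{lem:usup} and need not be reproved. The cleanest writeup likely states and uses the standard equivalent form of zero dimensionality, so that the bulk of the argument is the purely computational verification that $e = x^n u^n$ has the three required properties, with the local-to-global extraction of $u$ being the only genuinely nontrivial input.
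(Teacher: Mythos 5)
Your proposal follows essentially the same route as the paper in both directions: the ``easy'' direction is the paper's reduction to a domain (you just phrase it as reducing mod a prime rather than noting the property passes to quotients), and in the ``hard'' direction you propose exactly the paper's idempotent $e=x^nu^n$ built from an identity $x^n=x^{n+1}u$, with the same computational verifications (which do check out: $e(xu)=x^{n+1}u^{n+1}=x^nu^n=e$ and $ex^n=x^{2n}u^n=x^n$, so $((1-e)x)^n=0$).

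The one place you have not actually given a proof is the extraction of the identity $x^n=x^{n+1}u$ from zero dimensionality, and this is precisely where the paper's proof has its content. Your localize-and-glue sketch (``at each prime, $x$ is a unit or nilpotent'') is not yet a proof, because the local nilpotency degree $n$ depends on the prime and need not be bounded; this is a real issue, not a formality --- e.g.\ in $\prod_m k[t]/(t^m)$ the element $x=(t,t,\dots)$ satisfies the local dichotomy at no uniform $n$, and correspondingly that ring fails to be zero dimensional. To repair your sketch you would need to observe that the locus where $x^n\in(x^{n+1})$ locally is open and increasing in $n$ and invoke quasi-compactness of $\Spec L$ to get a uniform $n$. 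The paper avoids localization entirely with a self-contained trick: if no $y,n$ satisfy $x^{n+1}y=x^n$, then $0$ is not in the multiplicative set $S=\{(1-xy)x^n \mid y\in L,\ n\in\mathbb{N}\}$, so there is a prime $\mathfrak{p}$ disjoint from $S$, which is maximal by zero dimensionality; since $x\notin\mathfrak{p}$ one can choose $y$ with $xy\equiv 1 \pmod{\mathfrak{p}}$, contradicting $(1-xy)x\in S$. Either route works, but as written your proof has a gap at exactly this step.
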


\begin{proof}
Assume $L$ has the property that every element has a supporting idempotent
and note that this property 
passes to the quotients of $L$, thus in order to prove that $L$ is zero dimensional it is enough to assume that $L$ is
a domain and show that it is a field. In a domain the only idempotents are 0 and 1.
If $e_x=0$ then $x$ is nilpotent, hence $x=0$
and if $e_x=1$ then $x$ is invertible. Thus, indeed, $L$ is a field.

Assume now that $L$ is zero dimensional and fix $x\in L$.
Then there exist $y\in L$ and $n\in \mathbb{N}$ such that $x^{n+1}y=x^n$.
Indeed, otherwise 0 is not included in the multiplicative set 
\[ S=\{(1-xy)x^n\mid y\in L,~n\in \mathbb{N} \} \]
which contains $x$,
thus we can find a prime ideal $\mathfrak{p}\lhd L$ disjoint of $S$,
which is maximal by zero dimensionality,
and, since $x\notin \mathfrak{p}$, we get a contradiction by picking $y=x^{-1}$ mod $\mathfrak{p}$.
Note that for every $k\in \mathbb{N}$, $x^{n+k}y^k=x^n$
thus, in particular, $x^{2n}y^n=x^n$.
We set $e=x^ny^n$ and observe that $e$ is idempotent, as 
\[ e^2=x^{2n}y^{2n}=(x^{2n}y^n)y^n=x^ny^n=e \]
and $e$ is supporting for $x$, as 
\[ exy=(xy)^nxy=(x^{n+1}y)y^n=x^ny^n=(xy)^n=e \]
and
\[ ((1-e)x)^n=(1-e)x^n=(1-x^ny^n)x^n=x^n-x^{2n}y^n=x^n-x^n=0. \]
\end{proof}

\begin{lemma} \label{lem:LGamma}
    If $L$ is a zero dimensional ring and $\Gamma$ is a group acting on $L$ by automorphism then the ring of invariant $L^\Gamma$ is zero dimensional.
\end{lemma}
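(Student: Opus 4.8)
The plan is to apply the idempotent criterion of Lemma~\ref{lem:zdidem} to the invariant ring $L^\Gamma$: it suffices to show that every $x\in L^\Gamma$ admits a supporting idempotent lying inside $L^\Gamma$. I would start by using the zero-dimensionality of $L$ to produce the supporting idempotent $e_x\in L$, and then argue that the three defining features of $e_x$ are preserved by the $\Gamma$-action. The crucial tool throughout is the uniqueness statement of Lemma~\ref{lem:usup}, which I will use to force both $e_x$ and the relevant inverse into the invariant subring.

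First I would fix $x\in L^\Gamma$ and let $e_x\in L$ be its supporting idempotent, so that $e_x x$ is a unit in $e_x L$ and $(1-e_x)x$ is nilpotent in $L$. For any $\gamma\in\Gamma$, since $\gamma$ is a ring automorphism fixing $x$, the element $\gamma(e_x)$ is idempotent, $\gamma(e_x)x=\gamma(e_x x)$ is a unit in $\gamma(e_x)L=\gamma(e_x L)$, and $(1-\gamma(e_x))x=\gamma((1-e_x)x)$ is nilpotent. Thus $\gamma(e_x)$ is again a supporting idempotent of $x$, and by the uniqueness in Lemma~\ref{lem:usup} we get $\gamma(e_x)=e_x$. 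Hence $e_x\in L^\Gamma$.

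It remains to check that $e_x$ serves as a supporting idempotent of $x$ within $L^\Gamma$. The nilpotency condition is immediate: $(1-e_x)x$ lies in $L^\Gamma$ and is already nilpotent in $L$. The one point requiring care, which I expect to be the main obstacle, is the unit condition, namely that $e_x x$ is invertible in $e_x L^\Gamma$ and not merely in $e_x L$. Here I would again exploit uniqueness, this time of inverses: let $z\in e_x L$ be the unique inverse of $e_x x$ in the ring $e_x L$, whose identity is $e_x$. Applying $\gamma$ and using $\gamma(e_x)=e_x$ and $\gamma(x)=x$, the element $\gamma(z)$ lies in $e_x L$ and is again an inverse of $e_x x$; by uniqueness $\gamma(z)=z$, so $z\in L^\Gamma$. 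Since $z=e_x z\in e_x L^\Gamma$ and $z(e_x x)=e_x$, the element $e_x x$ is a unit in $e_x L^\Gamma$. Therefore $e_x$ is a supporting idempotent of $x$ in $L^\Gamma$, and Lemma~\ref{lem:zdidem} yields that $L^\Gamma$ is zero dimensional.
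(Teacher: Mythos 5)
Your proof is correct and follows exactly the paper's approach: the paper's own proof is a one-line appeal to Lemma~\ref{lem:zdidem}, observing that the supporting idempotent of a $\Gamma$-invariant element is ``obviously'' $\Gamma$-invariant. You have simply spelled out the details behind that word, including the worthwhile check that the inverse of $e_x x$ in $e_x L$ is itself invariant (by uniqueness of inverses) and hence lies in $e_x L^\Gamma$.
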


\begin{proof}
    This follows from Lemma~\ref{lem:zdidem}, as for every $\Gamma$-invariant element, the supporting idempotent is obviously $\Gamma$-invariant as well.
\end{proof}

We note that the collection of idempotents in an algebra forms a lattice, where
$e_1\wedge e_2=e_1e_2$ and $e_1\vee e_2=e_1+e_2-e_1e_2$.

\begin{lemma} \label{lem:sg}
In a reduced zero dimensional ring,
every finitely generated ideal is generated by a single idempotent.
\end{lemma}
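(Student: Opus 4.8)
The plan is to reduce first to the case of a principal ideal, where the supporting idempotent does all the work, and then to merge finitely many idempotent generators into one using the join operation on the idempotent lattice noted just before the lemma.

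First I would treat a single element $x\in L$. By Lemma~\ref{lem:zdidem}, $x$ has a supporting idempotent $e_x$, so that $e_xx$ is a unit in $e_xL$ and $(1-e_x)x$ is nilpotent in $L$. Here is the point where reducedness enters: nilpotency forces $(1-e_x)x=0$, i.e.\ $x=e_xx$, so $x\in(e_x)$. Conversely, since $e_xx$ is a unit in $e_xL$, there is $y\in e_xL$ with $e_xxy=e_x$, whence $e_x=x(e_xy)\in(x)$. Together these give $(x)=(e_x)$, so every principal ideal of $L$ is generated by an idempotent.

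Next I would pass to an arbitrary finitely generated ideal $I=(x_1,\dots,x_n)$. Setting $e_i:=e_{x_i}$, the previous step yields $I=(e_1,\dots,e_n)$. The key computation is that the ideal generated by two idempotents is again principal, generated by their join: $(e_1,e_2)=(e_1\vee e_2)$ with $e_1\vee e_2=e_1+e_2-e_1e_2$. The inclusion $e_1\vee e_2\in(e_1,e_2)$ is immediate, while $e_1(e_1\vee e_2)=e_1$ and $e_2(e_1\vee e_2)=e_2$ give the reverse inclusion. Iterating this (equivalently, arguing by induction on $n$ using associativity of $\vee$) produces $I=(e_1\vee\cdots\vee e_n)$, and the join of idempotents is again an idempotent, as recorded before the statement.

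I do not expect a serious obstacle here; the two essential ideas are that reducedness upgrades the nilpotency in the definition of the supporting idempotent to an actual equality, and that the lattice structure on idempotents lets one amalgamate finitely many idempotent generators. The only routine points to verify are that $e_1\vee e_2$ is idempotent and that $\vee$ is associative, both of which follow at once from the lattice formulas for $\wedge$ and $\vee$.
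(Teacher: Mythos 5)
Your proof is correct and follows exactly the paper's (one-line) argument: the paper simply asserts that $(x_1,\dots,x_n)=(e_{x_1}\vee\cdots\vee e_{x_n})$, and you have supplied precisely the verifications behind that claim, including the correct use of reducedness to get $x=e_xx$. No issues.
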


\begin{proof}
The ideal generated by $x_1,\ldots,x_n$ is generated by $e_{x_1} \vee \cdots \vee e_{x_n}$.
\end{proof}

Note that reduced zero dimensional rings are also known as \emph{von Neumann regular rings}. 
In the sequel we will be mostly interested in zero dimensional algebras over a field which are geometrically reduced.

\subsection{Zero dimensional algebras}

We now fix a field $k$.
We observe that zero dimensionality is not preserved by base change.

\begin{example}
The $k(x)$ algebra $k(x)\otimes_k k(y)$ is not zero dimensional.
Indeed, identifying it with its image in $k(x,y)$ we see that it is an integral domain, but $x+y$ is not invertible.  
\end{example}

However, zero dimensionality is preserved by a finite base change.

\begin{lemma} \label{lem:zdext}
    If $L$ is zero dimensional $k$-algebra and $k<k'$ is a finite field extension then $L' =k' \otimes_k L$ is also a zero dimensional $k$-algebra.
\end{lemma}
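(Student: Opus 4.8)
The plan is to exhibit $L' = k' \otimes_k L$ as a finite, hence integral, extension of $L$ and then to invoke the invariance of Krull dimension under integral extensions. Since $[k':k] < \infty$, choosing a $k$-basis $1 = e_1, e_2, \ldots, e_d$ of $k'$ identifies $L'$ with the free $L$-module $\bigoplus_{i=1}^d L e_i$. The structure map $\iota : L \to L'$, $x \mapsto 1 \otimes x$, is then the inclusion of the first summand, so it is an injective ring homomorphism and $L'$ is module-finite over $L$.

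First I would record that module-finiteness makes $\iota$ an integral ring extension: every $z \in L'$ lies in the finitely generated $L$-module $L'$, hence satisfies a monic polynomial with coefficients in $L$. Thus $L \subseteq L'$ is an integral extension of commutative unital rings, and no Noetherian hypothesis is needed.

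Next I would apply the Cohen--Seidenberg theory. Suppose $\mathfrak{q} \subseteq \mathfrak{q}'$ are primes of $L'$ and set $\mathfrak{p} = \iota^{-1}(\mathfrak{q})$ and $\mathfrak{p}' = \iota^{-1}(\mathfrak{q}')$; then $\mathfrak{p} \subseteq \mathfrak{p}'$ in $L$. Since $\dim L = 0$, every prime of $L$ is maximal, so $\mathfrak{p} = \mathfrak{p}'$. Incomparability for integral extensions then forces $\mathfrak{q} = \mathfrak{q}'$. Hence $L'$ admits no strict chain of primes, i.e. $\dim L' = 0$, which is the claim.

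The computation is entirely formal; the only points demanding a word of care are the injectivity and finiteness of $\iota$, both immediate from the freeness of $k'$ over $k$, and the use of the incomparability theorem, which is what genuinely transports the vanishing of the dimension across the extension. I expect this incomparability step to be the conceptual heart. If instead one wished to argue inside the idempotent framework of this section via Lemma~\ref{lem:zdidem}, the obstacle would be producing, for a general tensor $\sum a_i \otimes x_i$, an explicit supporting idempotent in $L'$, which seems far less transparent than the dimension-theoretic route.
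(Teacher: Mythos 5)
Your proof is correct, but it takes a different route from the paper's. You package the statement as a consequence of Cohen--Seidenberg theory: $L'$ is free of finite rank over $L$, hence integral over it, and incomparability collapses any chain of primes in $L'$ because its contraction to the zero dimensional ring $L$ is already constant. The paper instead argues directly at the level of a single prime: given $p' \lhd L'$, it contracts to $p \lhd L$, observes that $L/p$ is a field, and notes that $L'/p'$ is an integral domain which is a finite dimensional vector space over $L/p$, hence a field (multiplication by a nonzero element is injective, so surjective by dimension count). The two arguments share the same kernel --- a domain that is finite over a field is a field --- but the paper reaches it in three lines without invoking any named theorem, while your version makes explicit the stronger and more general fact that zero dimensionality descends along \emph{any} integral ring extension, with the field structure of $k'$ entering only to guarantee freeness and hence finiteness. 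Your closing worry about the idempotent-theoretic alternative is reasonable; neither the paper nor you needs Lemma~\ref{lem:zdidem} here.
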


\begin{proof}
    We fix a prime ideal $p'\lhd L'$ and argue to show that $L'/p'$ is a field.
    We denote $p=p'\cap L$ and note that $L/p$ is a field and
    $k' \otimes_k L/p$ is a finite dimensional vector space over $L/p$.
    Hence its quotient $L'/p'$ is also finite dimensional vector space over $L/p$.
    Since $L'/p'$ is an integral domain, we conclude that it is indeed a field. 
\end{proof}

\begin{lemma} \label{lem:finitealg}
    A geometrically reduced zero dimensional $k$-algebra $L$ is finite over $k$ iff it is a finite sum of finite separable field extensions of $k$.
\end{lemma}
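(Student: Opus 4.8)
The plan is to prove the two directions separately, with the forward implication carrying essentially all of the content. The reverse direction is immediate: a finite direct sum (equivalently, finite product) of finite field extensions of $k$ is visibly finite-dimensional as a $k$-vector space, so neither the zero-dimensionality nor the geometric reducedness hypothesis is needed there.

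For the forward direction, assume $L$ is geometrically reduced, zero dimensional, and finite over $k$. First I would observe that finiteness over $k$ already makes $L$ an Artinian ring, since a finite-dimensional $k$-algebra satisfies the descending chain condition on ideals, these being $k$-subspaces of a finite-dimensional space. I would then produce a decomposition $L \cong \prod_{i=1}^n K_i$ into a finite product of fields. One route is the standard structure theorem for Artinian rings: $L$ is a finite product of local Artinian rings, and each local factor is a field once we use that $L$ is reduced, since the maximal ideal of a local Artinian ring is nilpotent and hence zero in a reduced ring. A route more in the spirit of Section~\ref{sec:zd} is to note that by Lemma~\ref{lem:sg}, together with Noetherianity, every ideal of $L$ is generated by a single idempotent; the finitely many maximal ideals are then complementary to a family of orthogonal primitive idempotents $e_1,\dots,e_n$ with $\sum_{i=1}^n e_i = 1$, yielding $L \cong \prod_{i=1}^n e_i L$ with each $e_i L$ a field that is finite over $k$.

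It remains to upgrade each $K_i$ from a finite extension to a finite \emph{separable} extension, and this is the step where geometric reducedness finally enters and which I regard as the conceptual crux. Base changing the decomposition to the algebraic closure gives $L \otimes_k \bar{k} \cong \prod_{i=1}^n (K_i \otimes_k \bar{k})$, and since a finite product is reduced if and only if each of its factors is reduced, geometric reducedness of $L$ is equivalent to reducedness of each $K_i \otimes_k \bar{k}$. The argument then closes by invoking the standard characterization that a finite field extension $K_i/k$ is separable precisely when $K_i \otimes_k \bar{k}$ is reduced. The main obstacle is therefore not any single hard computation but rather the assembly of these two classical facts, the Artinian decomposition into fields and the reducedness criterion for separability, together with the observation that geometric reducedness is inherited by the factors of the product, which the base-change identity above makes transparent.
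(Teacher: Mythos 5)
Your proof is correct and follows essentially the same route as the paper: decompose $L$ into a finite product of fields via its (finitely many) idempotents, then deduce separability of each factor from geometric reducedness. The paper's version is terser --- it merely observes that finiteness over $k$ forces finitely many idempotents, reduces to the single-idempotent case where a reduced zero-dimensional algebra is a field, and invokes geometric reducedness for separability --- but the content is the same.
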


\begin{proof}
    The ``if" direction is clear. For the ``only if" direction, observe that if $L$ is over $k$ than it has finitely many idempotents, thus we assume as we may that it has a single idempotent, in which case it is a filed, which must be separable over $k$ by the assumption that $L$ is geometrically reduced.
\end{proof}

\section{Geometrizations} \label{sec:geo-cat}

We fix a field $k$
and consider the category of $k$-schemes and their morphisms.
$k$-schemes, that is schemes over $k$, will be denoted by boldface letters.
By definition they come equipped with a scheme morphism to $\Spec(k)$ which our notation will suppress.
Typically, we will also suppress the reference to $k$ when discussing $k$-morphisms.
Similarly, all algebras, vector spaces etc., as well as their morphisms,
will be tacitly assumed to be over $k$, unless stated otherwise.
We fix throughout this section a scheme $\bfY$ (not necessarily of finite type)
and we will consider morphisms from $\bfY$ to various varieties.
To be clear, we use the following convention.

\begin{defn} \label{def:variety}
By a variety we mean a geometrically reduced separated $k$-scheme of finite type (not necessarily irreducible).
\end{defn}

A morphism $\bfY\to \bfX$ such that $\bfX$ is a variety is called a \emph{geometrization} of $\bfY$.
A \emph{morphism of geometrizations} of $\bfY$ from $\phi_1:\bfY\to \bfX_1$ to $\phi_2:\bfY\to\bfX_2$
is a morphism $\theta:\bfX_1\to\bfX_2$ such that $\phi_2=\theta\circ \phi_1$.
Thus, the category of geometrizations of $\bfY$ is the full subcategory of the corresponding coslice category (a.k.a. under category),
where the codomains are varieties.
The category of geometrizations of $\bfY$ has a final object, the
\emph{trivial geometrization} $\bfY\to \spec(k)$.  
We note that products of varieties are varieties, by \cite[Lemma 035Z]{SP},
and observe that the category of geometrizations of $\bfY$ has products, the product of 
$\phi_1:\bfY\to \bfX_1$ and $\phi_2:\bfY\to\bfX_2$ being $\phi_1\times \phi_2:\bfY\to \bfX_1\times \bfX_2$.

We now consider the full subcategory of dominant geometrizations of $\bfY$, that is dominant morphisms $\bfY\to \bfX$ where $\bfX$ is a variety. 

\begin{prop} \label{prop:D}
    If $\bfY$ is a geometrically reduced scheme then the inclusion of the category of dominant geometrizations of $\bfY$
    in the category of all geometrizations of $\bfY$ has a right adjoint functor $D$.
\end{prop}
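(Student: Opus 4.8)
The plan is to let $D$ send a geometrization $\phi\colon\bfY\to\bfX$ to the reduced closed subscheme $\bfX_0\subseteq\bfX$ supported on the closure $\overline{\phi(\bfY)}$ of the set-theoretic image, together with the induced morphism $\phi_0\colon\bfY\to\bfX_0$. Since $\bfY$ is reduced, $\phi$ factors through the reduced induced structure on any closed subset containing its image, so $\phi_0$ is well defined and is the scheme-theoretic image of $\phi$; by construction $\phi_0$ is dominant. Thus $D(\phi):=\phi_0$ lies in the subcategory of dominant geometrizations, and the closed immersion $\iota_\phi\colon\bfX_0\hookrightarrow\bfX$ is a morphism of geometrizations from $D(\phi)$ to $\phi$. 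I would take $\iota_\phi$ as the counit and invoke the universal-arrow criterion: to produce the right adjoint it suffices to show that every morphism of geometrizations into $\phi$ whose source is a dominant geometrization factors uniquely through $\iota_\phi$.

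So let $\psi\colon\bfY\to\mathbf Z$ be a dominant geometrization and let $\theta\colon\mathbf Z\to\bfX$ be a morphism of geometrizations from $\psi$ to $\phi$, i.e. $\theta\circ\psi=\phi$. Because $\psi(\bfY)$ is dense in $\mathbf Z$ and $\theta$ is continuous, one has $\theta(\mathbf Z)\subseteq\overline{\theta(\psi(\bfY))}=\overline{\phi(\bfY)}=|\bfX_0|$; hence $\theta$ maps the reduced scheme $\mathbf Z$ into the closed set underlying $\bfX_0$, and therefore factors as $\theta=\iota_\phi\circ\bar\theta$ for a unique $\bar\theta\colon\mathbf Z\to\bfX_0$, uniqueness being forced by the fact that the closed immersion $\iota_\phi$ is a monomorphism. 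Since $\bar\theta\circ\psi=\phi_0=D(\phi)$, the map $\bar\theta$ is a morphism of dominant geometrizations $\psi\to D(\phi)$, and it is plainly the unique factorization of $\theta$ through the counit. This is exactly the required universal property, so $D$ extends to a functor right adjoint to the inclusion.

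The step I expect to be the crux --- and the only place the full strength of the hypothesis on $\bfY$ enters --- is checking that $\bfX_0$ is a \emph{variety}, i.e. that it is not merely reduced but geometrically reduced; granting this, $\bfX_0$ is automatically separated and of finite type as a closed subscheme of the variety $\bfX$. Working on an affine open $\Spec(A)\subseteq\bfX$ and writing $B=\mathcal O_{\bfY}(\phi^{-1}\Spec(A))$, one has $\bfX_0\cap\Spec(A)=\Spec(A/I)$ with $I=\ker(A\to B)$, so that $A/I\hookrightarrow B$. Fixing an algebraic closure $\bar k$ and using that $\bar k$ is flat over $k$, this inclusion stays injective after applying $\bar k\otimes_k-$, so it suffices to prove that $\bar k\otimes_k B$ is reduced. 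Here I would choose a $k$-basis of $\bar k$ and use the sheaf axiom to embed $\bar k\otimes_k B$ into $\Gamma(\mathbf{U}_{\bar k},\mathcal O)$, where $\mathbf{U}=\phi^{-1}\Spec(A)$ and $\mathbf{U}_{\bar k}=\mathbf{U}\times_k\bar k$: a section $\sum_\alpha e_\alpha\otimes s_\alpha$ that pulls back to zero must have every $s_\alpha$ vanishing on each affine open, hence $s_\alpha=0$. Finally, since $\bfY$ is geometrically reduced the base change $\mathbf{U}_{\bar k}$ is reduced, so $\Gamma(\mathbf{U}_{\bar k},\mathcal O)$ and its subring $\bar k\otimes_k B$ are reduced; consequently $\bar k\otimes_k(A/I)$ is reduced, $A/I$ is geometrically reduced, and $\bfX_0$ is a variety. (Plain reducedness is immediate, as $I=\ker(A\to B)$ is radical whenever $B$ is reduced; the content of the hypothesis is precisely the passage to geometric reducedness.) One last routine point is that these affine constructions glue, which is clear from the intrinsic description of $\bfX_0$ as the reduced closure of the image.
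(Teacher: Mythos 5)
Your proposal is correct and follows essentially the same route as the paper: take the scheme-theoretic image (equivalently, since $\bfY$ is reduced, the reduced induced structure on the closure of the set-theoretic image), observe it is dominant, and reduce the only real issue to showing geometric reducedness of the image by embedding its affine rings into rings of functions on $\bfY$. The only difference is cosmetic: where the paper cites Stacks Project lemmas on local rings, you carry out the base-change-to-$\bar k$ injectivity argument by hand, and you spell out the universal-arrow verification that the paper leaves as ``straightforward.''
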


\begin{proof}
    Given a geometrization $\phi:\bfY\to \bfX$, we set $\bfX'$ to be scheme theoretic image of $\phi$, which is the smallest closed subscheme of $\bfX$ through which $\phi$ factors, see \cite[Definition 01R7]{SP}, and we set $D(\phi):\bfY \to \bfX'$ to be the corresponding morphism.
    Since $\bfY$ is reduced, $D(\phi)$ is dominant and $\bfX'$ is reduced by \cite[Lemma 056B]{SP}.
    By the proof of \cite[Lemma 01R6]{SP}, the morphisms of the local rings of $\bfX'$ into the local rings of $\bfY$ are injective. Using \cite[Lemma 035W]{SP} and \cite[Lemma 030T]{SP}, we get that $\bfX'$ is in fact geometrically reduced.
 The functoriality of $D$ and the fact that it is right adjoint to the inclusion functor is straightforward.
\end{proof}

 \begin{cor} \label{cor:prod}
    If $\bfY$ is a geometrically reduced scheme then the category of dominant geometrizations of $\bfY$ has a product,
    given by $\phi_1\times_D \phi_2:= D(\phi_1\times \phi_2)$
 \end{cor}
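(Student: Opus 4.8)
The plan is to deduce this purely formally from Proposition~\ref{prop:D} together with the already-established fact that the category of all geometrizations of $\bfY$ has products. Write $\mathcal{G}$ for the category of geometrizations of $\bfY$, $\mathcal{D}$ for its full subcategory of dominant geometrizations, and $\iota:\mathcal{D}\hookrightarrow\mathcal{G}$ for the inclusion, so that Proposition~\ref{prop:D} provides a right adjoint $D$ to $\iota$. Given $\phi_1,\phi_2\in\mathcal{D}$, I would form their product $\phi_1\times\phi_2$ in $\mathcal{G}$ (which exists by the discussion preceding Proposition~\ref{prop:D}), apply $D$, and check that $D(\phi_1\times\phi_2)$, equipped with suitable projections, is a product of $\phi_1$ and $\phi_2$ inside $\mathcal{D}$.

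The verification of the universal property is a chain of natural bijections. For any $\psi\in\mathcal{D}$,
\begin{align*}
\operatorname{Hom}_{\mathcal{D}}(\psi,\,D(\phi_1\times\phi_2))
&\cong \operatorname{Hom}_{\mathcal{G}}(\iota\psi,\,\phi_1\times\phi_2)\\
&\cong \operatorname{Hom}_{\mathcal{G}}(\iota\psi,\phi_1)\times\operatorname{Hom}_{\mathcal{G}}(\iota\psi,\phi_2)\\
&\cong \operatorname{Hom}_{\mathcal{D}}(\psi,\phi_1)\times\operatorname{Hom}_{\mathcal{D}}(\psi,\phi_2),
\end{align*}
where the first bijection is the adjunction $\iota\dashv D$, the second is the universal property of the product in $\mathcal{G}$, and the third uses that $\iota$ is fully faithful, being a full subcategory inclusion. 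All bijections are natural in $\psi$, so $D(\phi_1\times\phi_2)$ represents the product functor on $\mathcal{D}$ and is therefore the desired product. Equivalently, one may argue that the right adjoint $D$ preserves products and that full faithfulness of $\iota$ makes the unit $\mathrm{id}_{\mathcal{D}}\to D\iota$ an isomorphism, so that $D(\iota\phi_1\times\iota\phi_2)\cong D\iota\phi_1\times D\iota\phi_2\cong\phi_1\times\phi_2$ in $\mathcal{D}$.

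There is essentially no hard step here; the content lives in Proposition~\ref{prop:D}. The one point deserving care — and the reason $D$ cannot be omitted — is that $\phi_1\times\phi_2$ is typically not itself dominant even when $\phi_1$ and $\phi_2$ are, so the $\mathcal{G}$-product does not a priori lie in $\mathcal{D}$; passing to the scheme-theoretic image via $D$ is exactly what lands us back in $\mathcal{D}$. Concretely, if $D(\phi_1\times\phi_2):\bfY\to\bfX'$ with $\bfX'$ the scheme-theoretic image of $\phi_1\times\phi_2:\bfY\to\bfX_1\times\bfX_2$, then the projections of the product are obtained by restricting the two projections $\bfX_1\times\bfX_2\to\bfX_i$ to the closed subscheme $\bfX'$, which I would record explicitly in order to identify the product with the stated $\phi_1\times_D\phi_2$.
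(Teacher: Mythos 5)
Your proof is correct and follows essentially the same route as the paper, which deduces the corollary from Proposition~\ref{prop:D} via the general principle recorded in Remark~\ref{rem:prod} (a right adjoint to a full subcategory inclusion turns ambient products into products in the subcategory, ``as one can easily check''). Your chain of natural bijections is precisely the check the paper leaves to the reader, so there is nothing to add.
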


\begin{remark} \label{rem:prod}
Given a category, a right adjoint functor to the inclusion of a full subcategory is called a \emph{contraction}.
In the presence of a contraction, if the given (ambient) category has products, then a formula as in Corollary~\ref{cor:prod} defines products in the subcategory, 
as one can easily check.
\end{remark}

\begin{prop} \label{prop:clopen}
Let $\bfY$ be a zero dimensional scheme
and let $\phi:\bfY\to \bfX$ be a geometrization.
Then for every constructible subvariety $\bfZ\subset \bfX$, $\phi^{-1}(\bfZ)$ is clopen in $\bfY$,
corresponding to an idempotent $e_{\bfZ}\in \cO(\bfY)$.
\end{prop}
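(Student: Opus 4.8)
The plan is to reduce the statement to the following purely local and algebraic claim: if $A$ is a zero dimensional ring and $a\in A$, then the principal open $D(a)\subset\Spec(A)$ is clopen. Granting this, the proposition follows formally. First I would recall the standard bijection between clopen subsets of an arbitrary scheme $\bfY$ and idempotents of $\cO(\bfY)$: a clopen $W$ yields a decomposition $\bfY=W\sqcup(\bfY\setminus W)$ into open subschemes, whence $\cO(\bfY)=\cO(W)\times\cO(\bfY\setminus W)$ and the idempotent $(1,0)$ corresponds to $W$. Thus it suffices to prove that $\phi^{-1}(\bfZ)$ is clopen, and the idempotent $e_{\bfZ}$ is then produced automatically. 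Since clopenness is local on $\bfY$, I may assume $\bfY=\Spec(A)$ with $A$ zero dimensional; since an open subscheme, a localization, and a quotient of a zero dimensional ring are again zero dimensional, I am free to pass to affine opens of $\bfY$ throughout. Only the underlying constructible subset of $\bfZ$ will matter, so its scheme structure plays no role.

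For the local claim, fix $a\in A$ and let $e_a$ be its supporting idempotent, which exists by Lemma~\ref{lem:zdidem}. I claim $D(a)=D(e_a)$, and the latter is clopen since $D(e_a)=V(1-e_a)$ is simultaneously open and closed. Indeed, for a prime $\mathfrak p$ with $e_a\in\mathfrak p$ we have $e_a a\in\mathfrak p$ while $(1-e_a)a$ is nilpotent, hence in $\mathfrak p$, so $a=e_a a+(1-e_a)a\in\mathfrak p$; and for $\mathfrak p$ with $e_a\notin\mathfrak p$ the relation $e_a a y=e_a$ expressing that $e_a a$ is a unit in $e_aA$ reduces modulo $\mathfrak p$ (where $e_a$ maps to $1$) to $\bar a\bar y=1$, forcing $a\notin\mathfrak p$. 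This proves $D(a)=D(e_a)$. The argument is insensitive to nilpotents, so no reducedness hypothesis on $\bfY$ is needed.

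Finally I would assemble these ingredients. The operation $\phi^{-1}$ commutes with finite unions, finite intersections, and complements, and the clopen subsets of $\bfY$ form a Boolean subalgebra of its power set; so it is enough to treat preimages of a generating family for $\bfZ$ under the Boolean operations. Because $\bfX$ is of finite type over $k$, hence Noetherian, each affine open $\Spec(B)\subset\bfX$ has the property that its constructible subsets are precisely the Boolean combinations of the principal opens $D(f)$, $f\in B$. Working on an affine cover of $\bfY$ refining the $\phi^{-1}$ of a finite affine cover of $\bfX$, the preimage of such a $D(f)$ is $D(a)$ for $a$ the pullback of $f$, which is clopen by the local claim; taking the corresponding Boolean combination shows $\phi^{-1}(\bfZ)$ is clopen on each patch, hence clopen in $\bfY$. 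The one delicate point is bookkeeping: one must choose the affine cover of $\bfY$ compatibly with a finite affine cover of $\bfX$ (available since $\bfX$ is quasi-compact) so that every $f$ genuinely pulls back to an element of the coordinate ring of a patch. This is routine, but it is the place where care is required; the conceptual heart is entirely the supporting-idempotent computation of the second paragraph.
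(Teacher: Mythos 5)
Your proof is correct, and it takes a genuinely different route from the paper's. The paper reduces (via the Boolean-algebra observation, which you also use) to the case of a \emph{closed} subvariety $\bfZ\subseteq\bfX$, forms the base change $\bfY\times_\bfX\bfZ\to\bfY$, invokes the Stacks Project to see this is a closed immersion of finite presentation, and then applies Lemma~\ref{lem:sg} (every finitely generated ideal of a reduced zero dimensional ring is generated by an idempotent); note that the paper's proof assumes $\cO(\bfY)$ reduced at this step. You instead reduce to the case of a \emph{principal open} $D(f)$ in an affine chart of $\bfX$, whose preimage on an affine patch $\Spec(A)$ of $\bfY$ is $D(a)$ for the pullback $a$, and you show directly that $D(a)=D(e_a)=V(1-e_a)$ using the supporting idempotent from Lemma~\ref{lem:zdidem}; your case analysis on whether $e_a\in\mathfrak p$ is correct, as is the reduction of constructible sets in a Noetherian affine scheme to Boolean combinations of principal opens and the compatibility bookkeeping between the covers of $\bfX$ and $\bfY$. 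Your approach buys two things: it avoids the scheme-theoretic machinery (base change, finite presentation of closed immersions) in favor of a one-line idempotent computation, and it dispenses with the reducedness hypothesis, so it proves the proposition for an arbitrary zero dimensional scheme $\bfY$ exactly as stated. The paper's approach is shorter on the page because it outsources the work to cited lemmas, and its use of Lemma~\ref{lem:sg} keeps the argument uniform with the rest of Section~\ref{sec:zd}; but conceptually the two proofs rest on the same fact, namely that zero dimensionality forces the relevant locus to be cut out by an idempotent.
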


\begin{proof}
Since being clopen is a local condition, we assume as we may that $\bfY$ is affine,
corresponding to a reduced zero dimensional algebra $L$.
Since the collection of subsets in $\bfX$ which preimage is clopen forms a Boolean algebra,
we assume as we may that $\bfZ$ is a closed subvariety.
We note that the inclusion morphism $\bfZ\subseteq \bfX$ is a closed immersion of finite presentation 
(see \cite[Definition 01TP]{SP}).
We form the base change $\bfY\times_\bfX \bfZ\to \bfY$ and note that it is 
a closed immersion of finite presentation by \cite[Lemma 01QR and Lemma 01TS]{SP}.
We conclude by \cite[Lemma 01TV]{SP} that ideal in $L$ associated with $\bfY\times_\bfX \bfZ$ is finitely generated.
We are done by Lemma~\ref{lem:sg}.
\end{proof}

\begin{remark} \label{rem:projmeas}
In case $\bfY=\Spec(L)$, we conclude that for each constructible subvariety $\bfZ \subseteq \bfX$,
$\phi^{-1}(\bfZ)$ is the zero set of the idempotent $e_\bfZ \in L$.
The correspondence $\bfZ\mapsto e_\bfZ$ could be thought of as a projection valued measure on $\bfX$.
\end{remark}

\begin{prop}
If $\bfY$ is a zero dimensional scheme then $\cO(\bfY)$ is a zero dimensional algebra.
\end{prop}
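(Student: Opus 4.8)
The plan is to reduce to the criterion of Lemma~\ref{lem:zdidem} and then glue supporting idempotents across an affine cover. Concretely, it suffices to show that every global section $x\in\cO(\bfY)$ admits a supporting idempotent $e_x\in\cO(\bfY)$, i.e. an idempotent $e$ with $ex$ a unit in $e\cO(\bfY)$ and $(1-e)x$ nilpotent in $\cO(\bfY)$. Since $\bfY$ is zero dimensional I would first fix an affine open cover $\{\Spec(L_i)\}_i$ of $\bfY$ with each $L_i$ zero dimensional, and apply Lemma~\ref{lem:zdidem} on each chart to obtain a supporting idempotent $e_i\in L_i$ of the restriction $x|_{\Spec(L_i)}$. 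The remaining work is to assemble the $e_i$ into a global idempotent and to check the two defining conditions globally.

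The gluing is forced by uniqueness. The two conditions defining a supporting idempotent are visibly preserved by any ring homomorphism, since they are expressed by the equations $exy=e$ and $((1-e)x)^n=0$; hence the image of a supporting idempotent of $x$ under a restriction map is again a supporting idempotent of the restricted $x$. Applying this to the two restriction maps into the overlap $\Spec(L_i)\cap\Spec(L_j)$ shows that $e_i$ and $e_j$ restrict to supporting idempotents of the same element, so they agree there by Lemma~\ref{lem:usup}. Therefore the $e_i$ glue to a global section $e\in\cO(\bfY)$, which is idempotent because $e^2$ and $e$ agree on each chart. For the unit condition I would use that being a unit is a local property: on $\Spec(L_i)$ the section $ex$ is a unit in $e_iL_i=\cO(\bfY_e\cap\Spec(L_i))$, where $\bfY_e\subseteq\bfY$ is the clopen subscheme on which $e=1$; as inverses are unique they glue, so $ex$ is a unit in $\cO(\bfY_e)=e\cO(\bfY)$.

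The step I expect to be the main obstacle is the nilpotency of $(1-e)x$ as a global section. Locally one has $((1-e)x)^{n_i}=0$ on $\Spec(L_i)$, but passing from these local vanishings to a single global one requires controlling the exponents $n_i$. If $\bfY$ is reduced the difficulty disappears, since a locally nilpotent section then vanishes, giving $(1-e)x=0$ outright. In general one needs a finiteness input: over a finite affine cover (i.e. under quasi-compactness, which holds under the standing conventions) one may take $n=\max_i n_i$ and conclude that $((1-e)x)^n$ restricts to $0$ on every chart, hence vanishes. That some such hypothesis is genuinely needed is illustrated by $\coprod_n\Spec(k[t]/t^n)$, whose global sections $\prod_n k[t]/t^n$ fail to be zero dimensional because $(t,t,\dots)$ has no supporting idempotent. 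Once $(1-e)x$ is seen to be nilpotent, $e$ is a supporting idempotent for $x$, and Lemma~\ref{lem:zdidem} then yields that $\cO(\bfY)$ is zero dimensional.
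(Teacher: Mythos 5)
Your argument is correct and takes a genuinely different route from the paper's. The paper applies Proposition~\ref{prop:clopen} to the geometrization $\bfY\to\mathbb{A}^1$ determined by $f$ and to the open subvariety $\mathbb{A}^1-\{0\}$, obtaining directly a global idempotent $e_\bfZ$ cutting out the locus where $f$ is invertible, and then invokes the supporting-idempotent criterion; you instead manufacture the supporting idempotent by hand, chart by chart, via Lemma~\ref{lem:zdidem}, and glue the local idempotents using the uniqueness statement of Lemma~\ref{lem:usup} together with the observation that the defining equations of a supporting idempotent are preserved by restriction. Your route is more elementary and self-contained, since it bypasses the finite-presentation and base-change machinery behind Proposition~\ref{prop:clopen}; the paper's is shorter once that proposition is available.

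More significantly, you have isolated a point that the paper's one-line proof passes over in silence: the nilpotency of $(1-e)f$. Knowing that $f$ vanishes at every point of the complementary clopen only makes $(1-e)f$ \emph{locally} nilpotent, and your example $\coprod_n\Spec(k[t]/t^n)$ shows that for a non-quasi-compact, non-reduced zero dimensional scheme this is not enough, so the proposition as literally stated needs a further hypothesis. Both of your repairs work; note, though, that the standing conventions do not actually grant quasi-compactness ($\bfY$ is explicitly allowed to be of infinite type), so the hypothesis that saves the statement in the situations where the paper uses it is reducedness, which makes $(1-e)f=0$ outright. This is consistent with the rest of the paper: the proof of Proposition~\ref{prop:clopen} already takes the local rings to be \emph{reduced} zero dimensional algebras, and the downstream applications (Proposition~\ref{pro:zdaffine} as used in Theorem~\ref{thm:E}) concern geometrically reduced $\bfY$. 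Flagging this explicitly, as you do, is a genuine improvement.
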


\begin{proof}
    For $f\in \cO(\bfY)$ we consider the associated geometrization $\bfY \to \mathbb{A}^1$, the constructible (open) subvariety $\bfZ =\mathbb{A}^1-\{0\}\subset \mathbb{A}^1$ and the corresponding idempotent $e_\bfZ \in \cO(\bfY)$.
    Since $f$ is invertible on the clopen $f^{-1}(\bfZ)\subseteq \bfY$, we are done by Lemma~\ref{lem:usup}.
\end{proof}

\begin{prop} \label{pro:zdaffine}
If $\bfY$ is a zero dimensional scheme then every geometrization of $\bfY$ factors uniquely via the canonical map $\bfY\to \Spec(\cO(\bfY))$.
\end{prop}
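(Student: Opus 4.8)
The plan is to treat existence and uniqueness of the factorization separately: uniqueness will follow formally from the fact that $\bfX$ is separated, while existence will be reduced to the case of affine target, where it is nothing but the adjunction between $\Spec$ and the global-sections functor. The zero-dimensionality of $\bfY$ enters only in the reduction step, through Proposition~\ref{prop:clopen}. Throughout, write $c:\bfY\to\Spec(\cO(\bfY))$ for the canonical map, and note that on global sections $c$ induces the identity $\cO(\cO(\bfY))=\cO(\bfY)\to\cO(\bfY)$; in particular $c$ is scheme-theoretically dominant.

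I would dispose of uniqueness first. Suppose $\psi_1,\psi_2:\Spec(\cO(\bfY))\to\bfX$ both satisfy $\psi_i\circ c=\phi$. Since $\bfX$ is a variety it is separated, so the diagonal $\bfX\to\bfX\times\bfX$ is a closed immersion, and the equalizer $\mathbf{E}$ of $\psi_1,\psi_2$, namely the preimage of the diagonal under $(\psi_1,\psi_2):\Spec(\cO(\bfY))\to\bfX\times\bfX$, is a closed subscheme of $\Spec(\cO(\bfY))$. As $\psi_1\circ c=\psi_2\circ c$, the map $c$ factors through $\mathbf{E}$, so the scheme-theoretic image of $c$ is contained in the closed subscheme $\mathbf{E}$. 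But $c$ is scheme-theoretically dominant, so that image is all of $\Spec(\cO(\bfY))$; hence $\mathbf{E}=\Spec(\cO(\bfY))$ and $\psi_1=\psi_2$.

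For existence I would first record the affine case: if $\bfX$ is affine, a morphism $\bfY\to\bfX$ is the same datum as a $k$-algebra map $\cO(\bfX)\to\cO(\bfY)$, which produces $\Spec(\cO(\bfY))\to\bfX$ whose composition with $c$ recovers $\phi$. The whole content is to reduce to this. As $\bfX$ is of finite type it is quasi-compact, so I would cover it by finitely many affine open subschemes $\bfX_1,\dots,\bfX_n$. Each $\bfX_i$ is an open (hence constructible) subvariety of $\bfX$, so Proposition~\ref{prop:clopen} makes the preimage $U_i:=\phi^{-1}(\bfX_i)$ clopen in $\bfY$. The $U_i$ cover $\bfY$, and since clopen subsets form a Boolean algebra I would pass to the disjoint clopen refinement $W_i:=U_i\setminus\bigcup_{j<i}U_j$, obtaining $\bfY=\coprod_{i=1}^n W_i$ as schemes, with $\phi|_{W_i}$ factoring through the affine open $\bfX_i$.

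Finally I would glue. The decomposition $\bfY=\coprod_i W_i$ gives $\cO(\bfY)=\prod_i\cO(W_i)$ and therefore $\Spec(\cO(\bfY))=\coprod_i\Spec(\cO(W_i))$, under which $c=\coprod_i c_i$ for the canonical maps $c_i:W_i\to\Spec(\cO(W_i))$. Applying the affine case to each $\phi|_{W_i}:W_i\to\bfX_i$ yields $\psi_i:\Spec(\cO(W_i))\to\bfX_i\subseteq\bfX$ with $\psi_i\circ c_i=\phi|_{W_i}$, and assembling the $\psi_i$ over the disjoint union gives $\psi:\Spec(\cO(\bfY))\to\bfX$ with $\psi\circ c=\phi$. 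The main obstacle is precisely this reduction: were $\bfX$ affine the factorization would be purely formal, so the real work is in coping with the non-affineness of $\bfX$, which is exactly what the zero-dimensionality of $\bfY$ resolves via Proposition~\ref{prop:clopen}, letting me break $\bfY$ into clopen blocks each landing in a single affine chart.
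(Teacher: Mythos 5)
Your proof is correct and follows essentially the same route as the paper: dispose of the affine case by the $\Spec$--global-sections adjunction, use Proposition~\ref{prop:clopen} to turn a finite affine cover of $\bfX$ into a finite clopen partition of $\bfY$ (your $W_i$ are exactly the paper's $\bfY_i=\phi^{-1}(\bfZ_i)$), and glue via $\cO(\bfY)=\prod_i\cO(W_i)$. Your explicit uniqueness argument via separatedness and the scheme-theoretic dominance of $c$ is a welcome elaboration of a point the paper leaves implicit.
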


\begin{proof}
The statement is obvious if $\bfX$ is affine.
Assume $\bfU_i$ is a finite open affine cover of $\bfX$,
define $\bfZ_i=\bfU_i-\cup_{j< i} \bfU_j$
and set $\bfY_i=\phi^{-1}(\bfZ_i)$.
For every $i$, $\bfZ_i\subseteq \bfX$ is an affine subvariety
thus $\phi:\bfY_i\to \bfZ_i$ factors uniquely via the canonical map $\bfY_i \to \Spec(\cO(\bfY_i))$.
Since $\bfZ_i$ form a constructible partition of $\bfX$,
$\bfY_i$ form a clopen partition of $\bfY$, thus $\cO(\bfY)=\oplus_i\cO(\bfY_i)$
and $\Spec(\cO(\bfY)$ is the disjoint union of its clopen subsets $\Spec(\cO(\bfY_i))$.
The result follows.
\end{proof}

\section{Equivariant geometrizations} \label{sec:geo-eq}

In this section we fix a field $k$ and a $k$-algebraic group $\bfG$, that is a group scheme which is a $k$-variety (see Definition~\ref{def:variety}).
We consider the category $\mathcal{C}$ of $k$-varieties 
endowed with a $k$-morphic action by $\bfG$, 
and their $\bfG$-equivariant $k$-morphisms.
We fix a group $\Gamma$ and a group homomorphism $\Gamma\to \bfG(k)$.
We fix a $k$-scheme $\bfY$ (not necessarily of finite type) endowed with an action of $\Gamma$ by $k$-automorphisms.
We also fix once and for all an algebraically closed field extension $k<\bar{k}$ and we assume it is large enough so that $\bfY$ admits a geometric $\bar{k}$-point, $\Spec(\bar{k})\to \bfY$.

We denote by $\Geom(\bfY)$ the category of \emph{equivariant} geometrizations of $\bfY$,
that is $\Gamma$-equivariant morphisms of $k$-schemes $\phi:\bfY\to \bfX$, where $\bfX$ is an object in $\cC$.
The equivariant geometrization $\phi$ is said to be \emph{essential} if the associated geometrization $\phi':\bfG \times \bfY \to \bf X$, given by the composition
\[ \bfG \times \bfY \to \bfG \times \bfX \to \bfX, \]
where the map $\bfG \times \bfX \to \bfX$ is the action map,
is dominant.
The full subcategory of essential geometrizations is denoted $\Geom^E(\bfY)$.

\begin{prop} \label{prop:E}
    If $\bfY$ is a geometrically reduced scheme then the inclusion of categories $\Geom^E(\bfY) \subseteq \Geom(\bfY)$
has a right adjoint functor $E$.
\end{prop}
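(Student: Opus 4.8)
The plan is to mimic the construction of the contraction $D$ from Proposition~\ref{prop:D}, applied not to $\phi$ itself but to its associated geometrization $\phi'$, and then to check that the resulting scheme-theoretic image is $\bfG$-invariant. Concretely, given an equivariant geometrization $\phi:\bfY\to\bfX$, I would first observe that $\bfG\times\bfY$ is geometrically reduced, being a product of the geometrically reduced schemes $\bfG$ and $\bfY$, so that Proposition~\ref{prop:D} applies to the geometrization $\phi':\bfG\times\bfY\to\bfX$. This produces a scheme-theoretic image $\bfX'\subseteq\bfX$, a geometrically reduced closed subvariety, together with the dominant morphism $D(\phi'):\bfG\times\bfY\to\bfX'$.

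The main obstacle is to show that $\bfX'$ is $\bfG$-invariant, so that it becomes an object of $\cC$. The key point I would use is that $\phi'$ is itself $\bfG$-equivariant once $\bfG\times\bfY$ is equipped with the $\bfG$-action by left translation on the first factor, $h\cdot(g,y)=(hg,y)$; indeed $\phi'(hg,y)=(hg)\phi(y)=h\cdot\phi'(g,y)$. Since this action on the source is surjective (via the identity section), the set-theoretic image satisfies $\bfG\cdot(\text{image of }\phi')=\text{image of }\phi'$, so the image is $\bfG$-stable and hence so is its closure $|\bfX'|$. Because $\bfX'$ carries the reduced structure and $\bfG\times\bfX'$ is reduced, the action map $a$ restricted to $\bfG\times\bfX'$ has topological image inside $|\bfX'|$ and therefore factors through the reduced closed subscheme $\bfX'$; the group-action axioms for the resulting map $\bfG\times\bfX'\to\bfX'$ hold since they do after composing with the monomorphism $\bfX'\hookrightarrow\bfX$. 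Thus $\bfX'\in\cC$.

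Next I would produce $E(\phi)$. Restricting $\phi'$ along the identity section $y\mapsto(e,y)$ recovers $\phi$, and since $\phi'$ factors through $\bfX'$, so does $\phi$, yielding a $\Gamma$-equivariant morphism $\phi_E:\bfY\to\bfX'$ (using that $\bfX'$, being $\bfG$-invariant, is in particular $\Gamma$-invariant). This $\phi_E$ is essential, since its associated geometrization is precisely the dominant morphism $D(\phi')$. I then set $E(\phi)=\phi_E$, with counit the closed immersion $\bfX'\hookrightarrow\bfX$, which is a morphism of geometrizations $E(\phi)\to\phi$.

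Finally I would verify the universal property of the counit, which yields the adjunction. Let $\sigma:\bfY\to\bfW$ be essential and let $\theta:\bfW\to\bfX$ be a morphism of geometrizations, so $\theta$ is $\bfG$-equivariant with $\theta\circ\sigma=\phi$. Equivariance of $\theta$ gives $\theta\circ\sigma'=\phi'$, while essentiality of $\sigma$ makes $\sigma'$ dominant; hence $\theta$ carries the dense image of $\sigma'$ into the image of $\phi'$, forcing $\theta(\bfW)\subseteq|\bfX'|$. As $\bfW$ is reduced and $\bfX'$ is reduced and closed, $\theta$ factors uniquely through a morphism $\tilde\theta:\bfW\to\bfX'$, which is automatically $\bfG$-equivariant because $\bfX'\hookrightarrow\bfX$ is a $\bfG$-equivariant monomorphism. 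This $\tilde\theta$ is the required unique factorization $\sigma\to E(\phi)$ in $\Geom^E(\bfY)$, and the functoriality of $E$ together with the adjunction then follows formally, exactly as for $D$ in Proposition~\ref{prop:D}.
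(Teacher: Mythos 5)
Your construction is exactly the paper's: apply $D$ from Proposition~\ref{prop:D} to the associated geometrization $\phi'$ and restrict along the identity section $i:\bfY\to\{e\}\times\bfY$. The paper leaves the verification that $\bfX'$ is $\bfG$-invariant, that $E(\phi)$ is essential, and the universal property as "easily checked details," and your write-up supplies them correctly.
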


\begin{proof} 
 Given $\phi:\bfY\to \bfX$ in $\Geom(\bfY)$ we consider the associated geometrization of $\phi':\bfG \times \bfY \to \bf X$ introduced above and the geometrization $D(\phi'):\bfG \times \bfY \to \bfX'$, given in Proposition~\ref{prop:D}. Considering the obvious morphism $i:\bfY\to \{e\}\times \bfY \subseteq \bfG\times \bfY$, we set $E(\phi)=D(\phi') \circ i$.
 Given this construction, one easily checks the details of the proof.  
\end{proof}

As in the introduction, we consider the functor $F_\bfY:\mathcal{C}\to \mbox{Sets}$
which assigns to $\bfX$ the set of equivariant geometrizations $\bfY\to \bfX$.
If this functor is representable we will say that \emph{$\bfY$ is representable in $\cC$}.
In case $\bfY=\Spec(L)$, for some $k$-algebra $L$ endowed with an action of $\Gamma$,
we will write $F_L$ for $F_\bfY$ and say that \emph{$L$ is representable in $\cC$} if $\bfY$ is.

We observe that if $\bfY$ is represented in $\mathcal{C}$ by a variety $\bfX$, then the identity map of $\bfX$ corresponds to a geometrization $\bfY \to \bfX$ which is an initial object in $\Geom(\bfY)$, and conversely, if a geometrization $\bfY \to \bfX$ is an initial object in $\Geom(\bfY)$ than $\bfY$ is represented in $\mathcal{C}$ by a variety $\bfX$.

\begin{lemma} \label{lem:disjoint}
Assume $\bfY$ is geometrically reduced. If $\bfY$ is a disjoint union of $\Gamma$-invariant clopen subschemes $\bfY_1\cup \bfY_2$
then $\bfY$ is representable in $\cC$ iff both $\bfY_1$ and $\bfY_2$ are representable in $\cC$.
In this case, $\bfY$ is represented by the dijoint union $\bfX_1\cup\bfX_2$ where, for $i=1,2$ ,$\bfX_i$ represents $\bfY_i$.
\end{lemma}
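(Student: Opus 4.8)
The plan is to reduce everything to the universal property of coproducts (disjoint unions) in $\cC$. The first observation I would record is a functorial reformulation. Since $\bfY=\bfY_1\sqcup\bfY_2$ with $\bfY_1,\bfY_2$ clopen and $\Gamma$-invariant, a morphism $\bfY\to\bfX$ is the same datum as a pair of morphisms $\bfY_i\to\bfX$, and $\Gamma$-equivariance of the former is equivalent to $\Gamma$-equivariance of both restrictions; thus restriction to the two pieces gives a natural isomorphism $F_\bfY\cong F_{\bfY_1}\times F_{\bfY_2}$ of functors $\cC\to\mathrm{Sets}$. Next I would observe that $\cC$ has finite coproducts: the disjoint union $\bfX_1\sqcup\bfX_2$ of two objects of $\cC$ is again a variety (geometrically reduced, separated, of finite type) carrying the obvious $\bfG$-action, and it satisfies $\mathrm{Hom}_\cC(\bfX_1\sqcup\bfX_2,-)\cong\mathrm{Hom}_\cC(\bfX_1,-)\times\mathrm{Hom}_\cC(\bfX_2,-)$. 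Given these two facts, the implication ``$\bfY_1,\bfY_2$ representable $\Rightarrow$ $\bfY$ representable'' is immediate: if $\bfX_i$ represents $\bfY_i$, then $F_\bfY\cong\mathrm{Hom}_\cC(\bfX_1,-)\times\mathrm{Hom}_\cC(\bfX_2,-)\cong\mathrm{Hom}_\cC(\bfX_1\sqcup\bfX_2,-)$, and chasing the universal element identifies the representing geometrization with the disjoint union of the two representing geometrizations.

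The real work is the converse, and here I would first show that if $\bfY$ is represented by an initial geometrization $\phi_0:\bfY\to\bfX_0$, then $\phi_0$ is automatically essential. Writing $E(\phi_0):\bfY\to\bfX_0'$ for the essential contraction of Proposition~\ref{prop:E}, where $\bfX_0'\subseteq\bfX_0$ is the closed subvariety given by the scheme-theoretic image of the action and $c:\bfX_0'\hookrightarrow\bfX_0$ is the inclusion with $\phi_0=c\circ E(\phi_0)$, initiality of $\phi_0$ supplies $u:\bfX_0\to\bfX_0'$ with $u\circ\phi_0=E(\phi_0)$; then $c\circ u$ is an endomorphism of the initial object, hence $c\circ u=\mathrm{id}_{\bfX_0}$. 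Thus $c$ is a surjective closed immersion with reduced target, so it is an isomorphism, $\bfX_0'=\bfX_0$, and $\phi_0$ is essential; in particular $\bfG\cdot\phi_0(\bfY)$ is dense in $\bfX_0$.

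The second step is to split $\bfX_0$ along the decomposition of $\bfY$. I would apply the universal property to the geometrization $\phi_0':\bfY\to\bfX_0\sqcup\bfX_0$ that sends $\bfY_1$ into the first copy via $\phi_0|_{\bfY_1}$ and $\bfY_2$ into the second via $\phi_0|_{\bfY_2}$, obtaining $\theta:\bfX_0\to\bfX_0\sqcup\bfX_0$ with $\theta\circ\phi_0=\phi_0'$; composing with the fold map $p$ and using uniqueness of endomorphisms gives $p\circ\theta=\mathrm{id}$. Setting $\bfU_i:=\theta^{-1}(\text{$i$-th copy})$ then yields a $\bfG$-invariant clopen decomposition $\bfX_0=\bfU_1\sqcup\bfU_2$ with $\phi_0(\bfY_i)\subseteq\bfU_i$, and essentiality forces $\overline{\bfG\cdot\phi_0(\bfY_i)}=\bfU_i$. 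Finally I would verify that $\phi_0|_{\bfY_i}:\bfY_i\to\bfU_i$ represents $\bfY_i$: for a given $\psi:\bfY_i\to\bfX$, existence of a factorization comes from applying the universal property of $\phi_0$ to the geometrization $\bfY\to\bfX\sqcup\bfU_{3-i}$ obtained by gluing $\psi$ on $\bfY_i$ with $\phi_0|_{\bfY_{3-i}}$ on the other piece, and using density to see the resulting map carries $\bfU_i$ into $\bfX$; uniqueness follows since two $\bfG$-equivariant morphisms from the reduced $\bfU_i$ to the separated $\bfX$ that agree on the dense set $\bfG\cdot\phi_0(\bfY_i)$ must coincide.

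I expect the converse to be the main obstacle, and within it the two delicate points are (i) proving that the representing geometrization is automatically essential, without which $\bfU_i$ could be strictly larger than $\overline{\bfG\cdot\phi_0(\bfY_i)}$ and would fail to represent $\bfY_i$, and (ii) extracting the clopen splitting of $\bfX_0$ from the fold-map argument, which is precisely what guarantees the representing object is the disjoint union $\bfU_1\sqcup\bfU_2$ rather than a scheme in which the two components meet.
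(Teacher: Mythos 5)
Your proof is correct and follows essentially the same strategy as the paper's: both take the representing object for $\bfY_i$ to be the closure of $\bfG\cdot\phi_0(\bfY_i)$ (the target of the essential contraction $E(\phi_0|_{\bfY_i})$), and both obtain the required factorization by gluing a given geometrization of $\bfY_i$ with a filler on the other piece and invoking initiality of $\phi_0$. The only differences are cosmetic: the paper uses the trivial geometrization $\Spec(k)$ as the filler and lets the functor $E$ absorb the density/uniqueness bookkeeping, while you use $\phi_0|_{\bfY_{3-i}}$ as the filler and additionally make explicit that $\phi_0$ is essential and that $\bfX_0$ splits as the clopen union $\bfU_1\sqcup\bfU_2$.
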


\begin{proof}
It is clear that if, for $i=1,2$, $\bfY_i$ is represented by $\bfX_i$ then $\bfY$ is represented by $\bfX_1\cup\bfX_2$.
Assume that $\bfY$ is represented by $\bfX$,
thus $\Geom(\bfY)$ has an initial object $\phi:\bfY\to \bfX$.
We let $\phi_i=\phi|_{\bfY_i}$, consider it as a geometrizations of $\bfY_i$ and claim that $E(\phi_i)$ 
is an initial objects in $\Geom(\bfY_i)$,
where $E$ is the functor defined in Proposition~\ref{prop:E}.
Indeed, assuming without lost of the generality that $i=1$, for every geometrization $\phi'_1:\bfY_1\to \bfX'_1$, we consider the corresponding geometrization 
$\phi':\bfY \to \bfX'_1 \cup \Spec(k)$ given by $\phi'|_{\bfY_1}=\phi'_1$ and $\phi'|_{\bfY_2}$ being the trivial geometrization,
and applying the functor $E$ to the morphism of geometrizations of $\bfY_1$ given by 
$\bfX\to \bfX'_1 \cup \Spec(k)$, we obtain a morphism of geometrizations 
realizing $E(\phi_1)$ as an initial object.
\end{proof}

When the base field $k$ varies, we adopt the notation $\Geom_k(\bfY)$ to denote the category of equivariant $k$-geometrization of the $\Gamma$ $k$-scheme $\bfY$ by $k$-varieties in $\cC_k$, emphasizing the dependence on the base field.
For a field extension $k<k'$ and a equivariant $k$-geometrization $\phi:\bfY \to \bfX$,
the extension of scalars $\phi_{k'}:\bfY_{k'} \to \bfX_{k'}$ is an equivariant $k'$-geometrization of $\bfY_{k'}$.
We thus get a faithful functor from $\Geom_k(\bfY)$ to $\Geom_{k'}(\bfY_{k'})$.

\begin{prop}\label{prop:des}
	Let $\bfY$ be a $k$-scheme and $k<k'$ be a finite Galois field extension. If $\bfY_{k'}$ is $k'$-representable then $\bfY$ is $k$-representable. 
\end{prop}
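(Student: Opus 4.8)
The plan is to run Galois descent. Write $\Delta=\mathrm{Gal}(k'/k)$. Since $\bfY$, $\bfG$, and every object of $\cC=\cC_k$ are defined over $k$, each $\sigma\in\Delta$ induces a $\sigma$-semilinear automorphism of the base change $\bfY_{k'}$; this $\Delta$-action commutes with the $\Gamma$-action (both are functorial in the base change) and is compatible with the canonical $\Delta$-action on $\bfG_{k'}$. Likewise $\sigma$ yields an autoequivalence $\sigma^{*}\colon\cC_{k'}\to\cC_{k'}$, and for a $k$-variety $\bfX$ one has a canonical identification $\sigma^{*}(\bfX_{k'})\cong\bfX_{k'}$. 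Throughout I will use the faithfulness of the base change functor $\cC_k\to\cC_{k'}$ noted just above the statement.

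First I would upgrade the representing object to a descent datum. Let $\phi'\colon\bfY_{k'}\to\bfX'$ be the initial object of $\Geom_{k'}(\bfY_{k'})$, so that $\bfX'$ represents $F_{\bfY_{k'}}$. Using the canonical isomorphism $\sigma^{*}\bfY_{k'}\cong\bfY_{k'}$, which is $\Gamma$-equivariant because $\bfY$ is defined over $k$, one checks that $F_{\bfY_{k'}}$ is naturally isomorphic to $F_{\bfY_{k'}}\circ\sigma^{*}$: a geometrization $\bfY_{k'}\to\bfW$ is carried to $\bfY_{k'}\cong\sigma^{*}\bfY_{k'}\to\sigma^{*}\bfW$. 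Since $\sigma^{*}$ is an equivalence, the Yoneda lemma turns this isomorphism of functors into an isomorphism $\theta_{\sigma}\colon\bfX'\to\sigma^{*}\bfX'$ in $\cC_{k'}$, and the uniqueness built into Yoneda forces the usual cocycle condition. Because the entire construction takes place inside $\cC_{k'}$, the $\theta_{\sigma}$ are $\bfG_{k'}$-equivariant and $\phi'$ becomes $\Delta$-equivariant; thus $(\bfX',\{\theta_{\sigma}\})$ is a $\bfG$-equivariant Galois descent datum and $\phi'$ is a morphism of such data.

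Next I would descend the datum. By effectivity of Galois descent along the finite Galois cover $\Spec(k')\to\Spec(k)$, the datum $(\bfX',\{\theta_{\sigma}\})$ should come from a $k$-variety $\bfX$ with $\bfX_{k'}\cong\bfX'$, the $\bfG_{k'}$-action should descend to a $\bfG$-action (so that $\bfX\in\cC$), and the morphism $\phi'$ should descend to a $\Gamma$-equivariant $k$-morphism $\phi\colon\bfY\to\bfX$. This is the step I expect to be the main obstacle: for an arbitrary separated finite type $\bfX'$, Galois descent need not be effective, so one must verify that the representing variety $\bfX'$ meets a sufficient hypothesis—for instance that every $\Delta$-orbit of points lies in an affine open, which holds as soon as $\bfX'$ is quasi-projective. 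I would secure this either by citing a precise effectivity statement or by a separate argument that the representing variety can be taken quasi-projective; this is the one genuinely delicate point of the proof.

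Finally I would verify that $\phi$ represents $F_{\bfY}$, i.e. that it is initial in $\Geom_{k}(\bfY)$. Given any $k$-geometrization $\psi\colon\bfY\to\bfW$ with $\bfW\in\cC$, I base change to $\psi_{k'}\colon\bfY_{k'}\to\bfW_{k'}$ and factor it uniquely as $\psi_{k'}=\eta'\circ\phi'$ with $\eta'\colon\bfX'\to\bfW_{k'}$ in $\cC_{k'}$, using initiality of $\phi'$. Since $\psi$ and $\phi$ are already defined over $k$, both $\psi_{k'}$ and $\phi'$ commute with the descent data, so the uniqueness of the factorization forces $\eta'$ to commute with the descent data as well; hence $\eta'$ descends to a morphism $\eta\colon\bfX\to\bfW$ in $\cC$ with $\psi=\eta\circ\phi$. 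Uniqueness of $\eta$ follows from the faithfulness of base change combined with the uniqueness of $\eta'$. This exhibits $\phi$ as an initial object of $\Geom_{k}(\bfY)$, so $\bfY$ is $k$-representable.
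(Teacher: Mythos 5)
Your proof follows essentially the same route as the paper's: twist $\bfY_{k'}$ by each Galois automorphism, note that the twisted scheme is represented by the correspondingly twisted variety, extract the isomorphisms $\bfX'\to\sigma^{*}\bfX'$ via Yoneda (with the cocycle condition forced by uniqueness), and conclude by Galois descent. The effectivity concern you flag is legitimate, but the paper's own one-line appeal to ``Galois descent'' does not address it either; your write-up is simply a more careful and explicit version of the same argument.
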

\begin{proof}
Assume $\bfY_{k'}$ is represented in $\cC_{k'}$ by $\bfX$.
	For any $\alpha\in \text{Gal}(k'/k)$ 
 let $\bfY_{k'}^\alpha$ be the scheme $\bfY_{k'}$ with the $k'$-structure twisted by $\alpha$
	and observe that $\bfY_{k'}^\alpha$ is represented in $\cC_{k'}$ by $\bfX^\alpha$, the corresponding twist of $\bfX$. The natural isomorphism $\bfY_{k'} \to \bfY_{k'}^\alpha$ gives an isomorphism of functors $F_{\bfY} \to F_{\bfY_{k'}}$. By Yoneda's Lemma we obtain an isomorphism $\bfX\to \bfX^\alpha$. By Galois descent this yields a $k$-variety $\bar\bfX$ with a $k'$-isomorphism  $\bar\bfX_{k'}\cong \bfX$ and it is straight forward that $\bar\bfX$ represents $\bfY$ in $\cC_k$.
\end{proof}

We will say that a variety $\bfX$ in $\mathcal{C}$ is transitive if  the action of $\bfG_{\bar k}$  on $\bfX_{\bar k}$ is transitive.
We will say that a geometrization $\bfY \to \bfX$ in $\Geom(\bfY)$ is transitive if 
 $\bfX$ is transitive.
 We will denote the subcategory of transitive geometrizations in $\Geom(\bfY)$ by $\Geom^T(\bfY)$
 and note that $\Geom^T(\bfY)$ is a full subcategory of $\Geom^E(\bfY)$.

 Recall that a category is \emph{posetal} if the hom set of every two objects contains at most one element, and a posetal category satisfies the \emph{descending chain condition} if every backwards sequence of morphisms eventually consists of isomorphisms.

 \begin{prop} \label{prop:posetal}
     $\Geom^T(\bfY)$ is a posetal category satisfying the descending chain condition.
 \end{prop}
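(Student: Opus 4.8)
The plan is to reduce every assertion to the geometric situation over $\bar k$ and then to exploit the homogeneous-space structure of transitive varieties. I will use two descent facts for the faithfully flat extension $k<\bar k$: the base-change map $\operatorname{Hom}_k(\bfX,\bfX')\to\operatorname{Hom}_{\bar k}(\bfX_{\bar k},\bfX'_{\bar k})$ is injective, and a $k$-morphism is an isomorphism as soon as its base change to $\bar k$ is. The fixed geometric point $y\colon\Spec(\bar k)\to\bfY$ is the device pinning down basepoints: any equivariant geometrization $\phi\colon\bfY\to\bfX$ yields the $\bar k$-point $x_\phi:=\phi_{\bar k}\circ y$ of $\bfX_{\bar k}$, and the defining relation $\phi_2=\theta\circ\phi_1$ of a morphism of geometrizations $\theta\colon\bfX_1\to\bfX_2$ translates, after base change and composition with $y$, into $\theta_{\bar k}(x_{\phi_1})=x_{\phi_2}$.

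For the posetal claim I fix two objects $\phi_i\colon\bfY\to\bfX_i$ and two morphisms $\theta,\theta'\colon\bfX_1\to\bfX_2$ between them; by injectivity it suffices to prove $\theta_{\bar k}=\theta'_{\bar k}$. Both base changes agree on $x_{\phi_1}$, and being $\bfG_{\bar k}$-equivariant they then agree on the whole orbit $\bfG_{\bar k}\cdot x_{\phi_1}$, which by transitivity of $\bfX_1$ is all of $(\bfX_1)_{\bar k}(\bar k)$. Since $\bfX_1$ is a variety its $\bar k$-points are dense, and the equalizer of $\theta_{\bar k}$ and $\theta'_{\bar k}$ is closed because $\bfX_2$ is separated; a closed subscheme of a reduced finite type $\bar k$-scheme containing every closed point is the whole scheme, so $\theta_{\bar k}=\theta'_{\bar k}$.

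For the descending chain condition I consider an inverse sequence with transition morphisms $\theta_n\colon\bfX_{n+1}\to\bfX_n$. Passing to $\bar k$ and using the basepoints $x_n:=x_{\phi_n}$, which satisfy $\theta_{n,\bar k}(x_{n+1})=x_n$, I examine the scheme-theoretic stabilizers $\mathbf H_n:=\operatorname{Stab}_{\bfG_{\bar k}}(x_n)$. Equivariance forces $\mathbf H_{n+1}\subseteq\mathbf H_n$, so the $\mathbf H_n$ form a descending chain of closed subgroups of $\bfG_{\bar k}$, and this chain stabilizes: the dimensions $\dim\mathbf H_n$ are non-increasing and bounded below, hence eventually constant; once they are, the nested connected components $\mathbf H_{n+1}^\circ\subseteq\mathbf H_n^\circ$ of equal dimension coincide; and then the finite groups $\mathbf H_n/\mathbf H_n^\circ$ have non-increasing order and must stabilize. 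Thus $\mathbf H_n$ is eventually equal to some $\mathbf H$.

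It remains to turn ``equal stabilizers'' into ``isomorphism'', and this is the step I expect to be the main obstacle. The tool is the standard fact that over the algebraically closed field $\bar k$ a transitive action on a reduced finite type scheme $\bfX_n$ with basepoint $x_n$ yields an isomorphism $\bfG_{\bar k}/\mathbf H_n\xrightarrow{\sim}(\bfX_n)_{\bar k}$ with the \emph{scheme-theoretic} stabilizer $\mathbf H_n$; the delicate point is positive characteristic, where $\mathbf H_n$ may be non-reduced and one must genuinely use the scheme-theoretic stabilizer and the fppf quotient, not the naive orbit (the inseparable orbit map $\bfG_{\bar k}\to\bfX_n$ need not itself be an isomorphism). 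Granting this, for large $n$ both $(\bfX_{n+1})_{\bar k}$ and $(\bfX_n)_{\bar k}$ are identified with $\bfG_{\bar k}/\mathbf H$ carrying their basepoints to $e\mathbf H$, so $\theta_{n,\bar k}$ becomes an equivariant self-map of $\bfG_{\bar k}/\mathbf H$ fixing $e\mathbf H$, hence the identity on the reduced scheme $\bfG_{\bar k}/\mathbf H$. Therefore $\theta_{n,\bar k}$ is an isomorphism, and by descent so is $\theta_n$, which is precisely the descending chain condition.
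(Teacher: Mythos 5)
Your overall strategy is sound and in fact runs parallel to the paper's: the paper also passes to $\bar k$, uses the fixed geometric point to turn each transitive geometrization into a pointed transitive $\bfG_{\bar k}$-variety, and reduces both claims to a poset of algebraic subgroups of $\bfG_{\bar k}$, with the descending chain condition supplied by Noetherianity. Your posetal argument (agreement on the basepoint, hence on a dense orbit by equivariance and transitivity, hence everywhere by separatedness and reducedness, hence over $k$ by injectivity of base change on Hom sets) is correct and is essentially an explicit unwinding of the paper's assertion that the forgetful functor to pointed transitive $\bfG_{\bar k}$-varieties is faithful.

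There is, however, one step that genuinely fails, and it is exactly at the positive-characteristic point you were otherwise careful about. Having (rightly) chosen to work with scheme-theoretic stabilizers, you argue that the chain $\mathbf H_{n+1}\subseteq\mathbf H_n$ stabilizes because the dimensions eventually become constant, then ``nested connected components of equal dimension coincide,'' and then the component groups have non-increasing order. The quoted middle step is false for non-reduced group schemes: $\alpha_p\subseteq\alpha_{p^2}\subseteq\cdots$ is a strictly increasing chain of connected subgroup schemes of $\mathbb{G}_a$, all of dimension $0$, and $\mathbb{G}_m\times\alpha_{p^n}$ exhibits the same phenomenon in positive dimension; neither dimension nor the number of connected components detects infinitesimal thickenings. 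The conclusion you need is nevertheless true for a simpler reason, which is also what the paper invokes: a descending chain of closed subschemes of the Noetherian scheme $\bfG_{\bar k}$ corresponds to an ascending chain of quasi-coherent ideal sheaves and therefore stabilizes. With that substitution the rest of your argument --- the identification $(\bfX_n)_{\bar k}\cong \bfG_{\bar k}/\mathbf H_n$ via the fppf quotient by the scheme-theoretic stabilizer (a standard fact for transitive actions on reduced finite type schemes over $\bar k$, as you correctly flag), the resulting identification of $\theta_{n,\bar k}$ with the identity of $\bfG_{\bar k}/\mathbf H$, and fpqc descent of isomorphisms down to $k$ --- goes through.
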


\begin{proof}
Consider the extension of scalars of $\bfG$ from $k$ to $\bar{k}$, $\bfG_{\bar{k}}$,
and let $\Sub(\bfG_{\bar{k}})$ be the set of reduced $\bar{k}$-algebraic subgroups of $\bfG_{\bar{k}}$.
This is a poset, with respect to inclusion, which satisfies the descending chain condition, by Neotherianity.
Considering it as a posetal category, $\Sub(\bfG_{\bar{k}})$ is equivalent to the category $\Tran(\bfG_{\bar{k}})$, consisting of pointed transitive $\bfG_{\bar{k}}$-varieties defined over $\bar{k}$, thus $\Tran(\bfG_{\bar{k}})$ is a 
posetal category satisfying the descending chain condition. 
We will prove the proposition by constructing a faithful functor from $\Geom^T(\bfY)$ to $\Tran(\bfG_{\bar{k}})$.

We recall that the extension of scalars from $k$ to $\bar{k}$ provides a faithful functor from the category $\Geom^T_k(\bfY)$ to the category $\Geom^T_{\bar{k}}(\bfY_{\bar{k}})$, consisting of transitive equivariant $\bar{k}$-geometrization of $\bfY_{\bar{k}}$.
We also recall that $\bar{k}$ was chosen so that $\bfY$ admits a $\bar{k}$-point
and fix such a point, $\Spec(\bar{k})\to \bfY$.
Given $\phi:\bfY_{\bar{k}}\to \bfX$ in $\Geom^T_{\bar{k}}(\bfY_{\bar{k}})$ we get a $\bar{k}$-point in $\bfX$ by precomposing $\phi$ with the map $\psi$ given by the composition
\[ \Spec(\bar{k}) \to \Spec(\bar{k}\otimes_k \bar{k}) \to \bfY_{\bar{k}},\]
where the first map is the dual to the multiplication map of $\bar{k}$ and the second map is the extension of scalars of our fixed $\bar{k}$-point.
Keeping the geometric point $\phi\circ \psi$ but forgetting $\phi$, gives a forgetful functor from $\Geom^T_{\bar{k}}(\bfY_{\bar{k}})$ to $\Tran(\bfG_{\bar{k}})$, which is easily seen to be fully faithful. 
Thus the composition of functor
\[ \Geom^T(\bfY)\to \Geom^T_{\bar{k}}(\bfY_{\bar{k}}) \to \Tran(\bfG_{\bar{k}}) \]
gives the required faithful functor.
\end{proof}

\begin{thm} \label{thm:erginit}
    If $\bfY$ is zero dimensional, geometrically reduced and we have $\cO(\bfY)^\Gamma=k$ then $F_\bfY$ is represented by a transitive variety in $\cC$.
\end{thm}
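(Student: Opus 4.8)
The plan is to show that $\Geom(\bfY)$ has an initial object which is a transitive geometrization; since an initial object of $\Geom(\bfY)$ is exactly a representing object for $F_\bfY$ (as observed before Lemma~\ref{lem:disjoint}), this proves the theorem. By Proposition~\ref{prop:posetal} the category $\Geom^T(\bfY)$ is posetal and satisfies the descending chain condition, and it is nonempty because the trivial geometrization $\bfY\to\Spec(k)$ is transitive. A nonempty posetal category with the descending chain condition in which every pair of objects has a common lower bound has an initial object: starting from any object and passing to a strictly smaller one whenever the object fails to lie below a chosen second object, the descending chain condition yields a minimal object, and the existence of common lower bounds forces this minimal object to be a minimum. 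So the first task is to prove that $\Geom^T(\bfY)$ is downward directed, and the second is to upgrade the resulting initial object of $\Geom^T(\bfY)$ to an initial object of $\Geom(\bfY)$.

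The engine for both tasks, and the step I expect to be the main obstacle, is the following ergodicity lemma: for any equivariant geometrization $\phi:\bfY\to\bfX$ with $\bfX\in\cC$, the image $\phi(\bfY_{\bar k})$ is contained in a single $\bfG_{\bar k}$-orbit of $\bfX_{\bar k}$. To prove it I would pass to $\bar k$: since $\Gamma$ acts $\bar k$-linearly with trivial action on scalars, $\cO(\bfY_{\bar k})^\Gamma=\cO(\bfY)^\Gamma\otimes_k\bar k=\bar k$, so ergodicity persists, and $\bfY_{\bar k}$ is still zero dimensional and reduced, so Proposition~\ref{prop:clopen} applies. For each $d$ the locus $\bfX_{\le d}\subseteq\bfX_{\bar k}$ of points on orbits of dimension at most $d$ is a closed $\bfG_{\bar k}$-invariant subvariety; by Proposition~\ref{prop:clopen} its preimage in $\bfY_{\bar k}$ is clopen, given by an idempotent of $\cO(\bfY_{\bar k})$ which is $\Gamma$-invariant (as $\bfX_{\le d}$ is $\Gamma$-invariant, $\Gamma\to\bfG(k)$ acting through $\bfG$), hence lies in $\cO(\bfY_{\bar k})^\Gamma=\bar k$ and equals $0$ or $1$. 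Taking the minimal $d_0$ with full preimage, $\phi(\bfY_{\bar k})$ lands in the open subvariety $\bfX_{=d_0}$ of orbits of dimension exactly $d_0$; these orbits are clopen in $\bfX_{=d_0}$, so by Noetherianity there are finitely many, and the same idempotent argument applied to each forces $\phi(\bfY_{\bar k})$ into exactly one of them.

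Granting the lemma, downward directedness follows. Given transitive $\phi_1:\bfY\to\bfX_1$ and $\phi_2:\bfY\to\bfX_2$, the lemma applied to $\phi_1\times\phi_2$ places $(\phi_1\times\phi_2)(\bfY_{\bar k})$ in a single orbit $\bfO_{\bar k}\subseteq(\bfX_1\times\bfX_2)_{\bar k}$. This image is stable under $\mathrm{Aut}(\bar k/k)$ and nonempty, and distinct orbits are disjoint, so $\bfO_{\bar k}$ is Galois stable and descends to a locally closed transitive $k$-subvariety $\bfO\subseteq\bfX_1\times\bfX_2$ lying in $\cC$. Since $(\phi_1\times\phi_2)^{-1}(\bfO)$ is a subscheme of the reduced scheme $\bfY$ whose base change to $\bar k$ is all of $\bfY_{\bar k}$, it equals $\bfY$, so $\phi_1\times\phi_2$ factors through $\bfO$; composing with the two projections realizes $\bfY\to\bfO$ as a common lower bound of $\phi_1$ and $\phi_2$ in $\Geom^T(\bfY)$. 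Hence $\Geom^T(\bfY)$ has an initial object $\phi_0:\bfY\to\bfX_0$.

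Finally I would show $\phi_0$ is initial in $\Geom(\bfY)$. For existence, given any $\phi:\bfY\to\bfX$, apply the functor $E$ of Proposition~\ref{prop:E}; the counit factors $\phi$ through the essential geometrization $E(\phi):\bfY\to\bfX'$, and the lemma together with the descent of the preceding paragraph factors $E(\phi)$ through a transitive geometrization $\bfY\to\bfO'$. Initiality of $\bfX_0$ in $\Geom^T(\bfY)$ provides a map $\bfX_0\to\bfO'$, and the composite $\bfX_0\to\bfO'\hookrightarrow\bfX'\to\bfX$ precomposed with $\phi_0$ recovers $\phi$. For uniqueness, if $\theta,\theta':\bfX_0\to\bfX$ satisfy $\theta\circ\phi_0=\theta'\circ\phi_0$, then evaluating at a fixed $\bar k$-point of $\bfY$ shows $\theta$ and $\theta'$ agree at one $\bar k$-point of $\bfX_0$; as that point has dense orbit (transitivity) and $\theta,\theta'$ are $\bfG$-equivariant, they agree on all of $\bfX_{0,\bar k}$ and hence are equal. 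Thus $\phi_0$ represents $F_\bfY$ by the transitive variety $\bfX_0$. The crux of the argument is the ergodicity lemma, and within the descent steps the identification of the relevant orbit as Galois stable; everything else is formal.
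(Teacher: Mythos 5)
Your overall architecture (get an initial object in $\Geom^T(\bfY)$ from the descending chain condition plus downward directedness, then upgrade it to an initial object of $\Geom(\bfY)$) matches the paper's, which instead packages the upgrade and the directedness through the contraction of Proposition~\ref{prop:T} and the products of Corollary~\ref{cor:prod}. The problem is your ``ergodicity lemma,'' which is exactly the point where the paper invokes Rosenlicht's theorem (Theorem~\ref{thm:ros}), and your substitute for it does not work. The load-bearing step is the assertion that $\bfY_{\bar k}$ is still zero dimensional, so that Proposition~\ref{prop:clopen} can be applied over $\bar k$ to the strata $\bfX_{\le d}$ and to individual orbits. Zero dimensionality is \emph{not} preserved under infinite base change: for instance $\bar k\otimes_k k(x)$ contains the chain $(0)\subsetneq(x-t)$ for any $t\in\bar k$ transcendental over $k$, so it has dimension $\ge 1$; and since $\bar k$ is chosen large enough to admit a geometric point of $\bfY$ (e.g.\ of $\Spec(k(\bfX_0))$ in Proposition~\ref{prop:field}), it will in general contain such transcendentals. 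The paper's Lemma~\ref{lem:zdext} covers only finite extensions for precisely this reason. Without zero dimensionality of $\bfY_{\bar k}$ you cannot produce the idempotents $e_{\bfX_{\le d}}$ or $e_O$ over $\bar k$, and the lemma's proof collapses. A secondary error: the orbits of dimension exactly $d_0$ are closed in $\bfX_{=d_0}$ (their closures only add lower-dimensional orbits) but not open, and there can be infinitely many of them --- take the trivial group acting on $\mathbb{A}^1$, where every point is a $0$-dimensional orbit --- so ``clopen, hence finitely many by Noetherianity'' is false. (The finiteness happens not to be needed for the covering argument, but identifying a single orbit still requires applying Proposition~\ref{prop:clopen} to each orbit over $\bar k$, which is the unavailable step; and working over $k$ instead only isolates a Galois orbit of orbits, which need not be transitive.)

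The paper's proof avoids all of this by staying over $k$: Rosenlicht's theorem supplies a dense open $\bfG$-invariant $\bfU\subseteq\bfX$ and a dominant $k$-morphism $\bfU\to\bfX'$ whose reduced fibers are single orbits. The idempotent $e_{\bfU}\in\cO(\bfY)$ (now legitimately produced by Proposition~\ref{prop:clopen} over $k$) is $\Gamma$-invariant, hence equals $1$ for an essential geometrization; the composite $\bfY\to\bfU\to\bfX'$ then factors through $\Spec(\cO(\bfY))\to\Spec(\cO(\bfY)^\Gamma)=\Spec(k)$ because $\bfG$ acts trivially on $\bfX'$, and dominance forces $\bfX'=\Spec(k)$, so $\bfU$ is a single transitive orbit already defined over $k$. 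This simultaneously yields your directedness (via the product of geometrizations) and the factorization of arbitrary geometrizations through transitive ones. To repair your argument you would need either to import Rosenlicht's theorem at this point or to find another way to see that the orbit containing the image is defined over $k$ and captures all of $\bfY$ --- the stratification-by-orbit-dimension idea alone does not get you there.
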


In the proof of Theorem~\ref{thm:erginit} we will use the following.

\begin{thm}[Rosenlicht, {\cite[Theorem 2]{ros56}}]\label{thm:ros}
Fix a variety $\bfX$ in $\mathcal{C}$.
Then there exist an open dense $\bfG$-invariant subset $\bfU\subset \bfX$, algebraic variety $\bfX'$ and a dominant $\bfG$-equivariant map $\phi:\bfU\to \bfX'$ (all defined over $k$) such that the reductions of its fibers are transitive. 
\end{thm}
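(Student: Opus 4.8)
The plan is to build the rational quotient of $\bfX$ by $\bfG$ out of the invariant rational functions, and then to prove that its generic fibre is a single orbit; this last separation statement is where the real work lies.

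First I would set $E = k(\bfX)^{\bfG}$, the field of $\bfG$-invariant rational functions, where $f \in k(\bfX)$ is called invariant when $a^*f = \mathrm{pr}_2^*f$ in $k(\bfG\times\bfX)$ for the action morphism $a\colon \bfG\times\bfX\to\bfX$. Since $k \subseteq E \subseteq k(\bfX)$ and $k(\bfX)/k$ is finitely generated, $E/k$ is finitely generated as well: picking a transcendence basis, $E$ is algebraic over a purely transcendental subfield $k(t_1,\dots,t_d)$ and, being an algebraic subextension of the finitely generated extension $k(\bfX)/k(t_1,\dots,t_d)$, is in fact finite over it. Let $\bfX'$ be a variety with $k(\bfX') = E$ and trivial $\bfG$-action. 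The inclusion $E \subseteq k(\bfX)$ defines a dominant rational map to $\bfX'$, regular on a dense open $\bfU \subseteq \bfX$; since it is given by invariant functions it is constant on orbits, hence $\bfG$-invariant as a rational map, and its domain of definition $\bfU$ — being intrinsic to the (invariant) rational map — is $\bfG$-invariant. This yields the dominant $\bfG$-equivariant $\phi\colon \bfU \to \bfX'$, so only the transitivity of the reduced fibres remains.

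For that I would pass to the generic fibre. Over the generic point $\Spec E$ of $\bfX'$ the reduced fibre of $\phi$ is a $\bfG_E$-variety with function field $k(\bfX)$, and the transitivity claim amounts to showing that over the algebraic closure $\overline{E}$ this fibre is a single orbit. One inclusion is formal: invariant functions are constant on orbits, so each orbit lies in a common level set of the elements of $E$, that is, in a fibre. The content is the reverse inclusion — that invariant rational functions separate orbits in general position. The key lemma is that the orbit of the generic point is defined over $E$: base-changing to $K = k(\bfX)$, letting $\xi \in \bfX(K)$ be the tautological point and $\mathcal{O} = \overline{\bfG_K\cdot \xi} \subseteq \bfX_K$ its reduced orbit closure, one shows that the field of definition of $\mathcal{O}$ inside $K$ is exactly $E$. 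Granting this, $\mathcal{O}$ descends to a closed $\bfG_E$-subvariety of $\bfX_E$ which is precisely the reduced generic fibre of $\phi$; as $\mathcal{O}$ is the orbit of one geometric point, the generic fibre is geometrically a single orbit. Spreading this out over a dense open of $\bfX'$ and pulling back through $\phi$ (a preimage of an open set under the invariant $\phi$ is automatically $\bfG$-invariant), I would shrink $\bfU$ once more so that every reduced fibre is a single $\bfG_{\bar k}$-orbit.

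The main obstacle is the separation lemma of the previous paragraph, namely that the field of definition of the generic orbit equals $E = k(\bfX)^{\bfG}$. That $\mathcal{O}$ is contained in the common level set of the elements of $E$, a set defined over $E$, is immediate; the hard direction is producing enough invariants to cut out $\mathcal{O}$ exactly, which is the substance of Rosenlicht's argument and where the genuine difficulties concentrate: that the transcendence degree of $E$ equals $\dim\bfX - \dim(\text{generic orbit})$, that the generic fibre is irreducible rather than merely equidimensional, and that inseparability over an imperfect $k$ forces one to reduce the fibres before they become transitive. Everything outside this lemma is the formal dictionary between the invariant field, the rational map $\phi$, and its generic fibre.
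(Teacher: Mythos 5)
This statement is imported by the paper as a black box --- it is Rosenlicht's theorem, cited from \cite{ros56}, and the paper contains no proof of it. So the only question is whether your proposal constitutes a self-contained proof, and it does not: at the decisive moment you write ``Granting this,'' where ``this'' is precisely the theorem's content. Your reduction to the separation lemma (that the field of definition of the reduced generic orbit closure $\mathcal{O}\subseteq \bfX_{K}$, $K=k(\bfX)$, is exactly $E=k(\bfX)^{\bfG}$) is a correct and standard way to organize the argument, and you are candid that everything else is formal; but the lemma itself is never argued. Nothing in your text produces a single nonconstant invariant function, let alone enough of them: the a priori possibility that $E=k$ while generic orbits have positive codimension is exactly what must be ruled out, and the inclusion you call ``formal'' (orbits lie in fibres of $\phi$) gives no information in that direction. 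Rosenlicht's actual argument constructs invariants from the orbit closure itself --- roughly, the coefficients of the Chow form / defining equations of $\mathcal{O}$ over $K$ generate a subfield of $K$ that one shows is invariant and has the right transcendence degree --- and none of that machinery appears in your sketch.

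Two smaller points, both of which you gesture at but which would also need real work in a complete proof: (i) over imperfect $k$ the field $E$ need not be separable over $k$ and the variety $\bfX'$ you choose with $k(\bfX')=E$ need not be geometrically reduced, which is why the theorem speaks of the \emph{reductions} of the fibres becoming transitive and why Rosenlicht's proof tracks inseparability carefully rather than treating it as a remark; (ii) the passage from ``the generic reduced fibre is one orbit'' to ``all reduced fibres over a dense open are orbits'' is a genuine spreading-out / constructibility argument (orbit dimension and the locus where the fibre equals the orbit closure are constructible), not automatic. As submitted, your proposal is an accurate map of where the difficulty lives, but the difficulty itself is left unresolved, so it is an outline rather than a proof; given that the paper deliberately cites this result rather than proving it, the appropriate course is either to cite \cite{ros56} likewise or to supply the separation lemma in full.
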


\begin{prop} \label{prop:T}
    If $\bfY$ is zero dimensional, geometrically reduced and we have $\cO(\bfY)^\Gamma=k$ then the inclusion of categories $\Geom^T(\bfY) \subseteq \Geom(\bfY)$
has a right adjoint functor.
\end{prop}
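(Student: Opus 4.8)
The plan is to produce the right adjoint by exhibiting, for each object of the ambient category, a terminal object in the comma category of transitive geometrizations lying over it; this is exactly what a right adjoint to a full-subcategory inclusion amounts to, and such objectwise terminal objects assemble into a functor automatically. Since right adjoints compose and Proposition~\ref{prop:E} already gives the right adjoint $E$ to $\Geom^E(\bfY)\subseteq\Geom(\bfY)$, and since $\Geom^T(\bfY)$ is a full subcategory of $\Geom^E(\bfY)$, it suffices to construct a right adjoint to $\Geom^T(\bfY)\subseteq\Geom^E(\bfY)$. Thus I may fix an \emph{essential} geometrization $\phi:\bfY\to\bfX$, meaning the action-composed map $\bfG\times\bfY\to\bfX$ is dominant, and construct a terminal object among transitive geometrizations over $\phi$. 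Note also that the hypothesis $\cO(\bfY)^\Gamma=k$ forces $\bfY\neq\emptyset$, since otherwise $\cO(\bfY)^\Gamma=0$.

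The heart of the argument is a collapsing step producing a single transitive target $\bfZ$. Apply Rosenlicht's Theorem~\ref{thm:ros} to $\bfX$ to get an open dense $\bfG$-invariant $\bfU\subseteq\bfX$, a variety $\bfX''$, and a dominant $\bfG$-equivariant $\rho:\bfU\to\bfX''$ whose fibers have transitive reductions; since each fiber is a single orbit, $\bfG$ must act trivially on $\bfX''$, so $\rho$ is $\bfG$-invariant. Because $\bfY$ is zero dimensional, $\phi^{-1}(\bfU)$ is clopen by Proposition~\ref{prop:clopen}, and it is $\Gamma$-invariant since $\bfU$ is $\bfG$-invariant and $\phi$ is $\Gamma$-equivariant; its idempotent therefore lies in $\cO(\bfY)^\Gamma=k$, so $\phi^{-1}(\bfU)$ is $\emptyset$ or $\bfY$. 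Dominance of $\bfG\times\bfY\to\bfX$ rules out $\emptyset$ (otherwise $\phi(\bfY)$, hence the whole image, would lie in the proper closed $\bfG$-invariant set $\bfX\setminus\bfU$), so $\phi(\bfY)\subseteq\bfU$. Then $\psi:=\rho\circ\phi:\bfY\to\bfX''$ is $\Gamma$-invariant; covering $\bfX''$ by finitely many affine opens, their preimages are clopen and $\Gamma$-invariant, so one of them is all of $\bfY$ and $\psi$ factors through an affine open whose functions pull back into $\cO(\bfY)^\Gamma=k$. Hence $\psi$ is a single $k$-point $x_0$, so $\phi$ factors through the fiber $\rho^{-1}(x_0)$ and, as $\bfY$ is reduced, through its reduction $\bfZ:=\rho^{-1}(x_0)_{\mathrm{red}}$, which by Rosenlicht is a transitive $k$-variety in $\cC$. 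This yields a transitive geometrization $t_\phi:\bfY\to\bfZ$ together with the counit $\bfZ\hookrightarrow\bfU\hookrightarrow\bfX$.

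It remains to verify terminality. Let $\bfY\to\bfX_1$ be any transitive geometrization with a morphism to $\phi$, realized by $\theta_1:\bfX_1\to\bfX$ with $\phi=\theta_1\circ(\bfY\to\bfX_1)$. Since $\bfX_1$ is transitive and $\theta_1$ is $\bfG$-equivariant, its image $\theta_1(\bfX_1)$ is a single $\bfG_{\bar k}$-orbit containing $\phi(\bfY)$, while $\bfZ$ is likewise an orbit containing $\phi(\bfY)$. As $\bfY\neq\emptyset$ has a $\bar{k}$-point, $\phi(\bfY)$ contains a $\bar k$-point lying in both orbits; two orbits sharing a geometric point coincide, so $\theta_1(\bfX_1)=\bfZ$ and $\theta_1$ factors as $\bfX_1\to\bfZ\hookrightarrow\bfX$. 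Because $\bfZ\hookrightarrow\bfX$ is a monomorphism, this factorization is a morphism $(\bfY\to\bfX_1)\to t_\phi$ in $\Geom^T(\bfY)$, and it is unique since $\Geom^T(\bfY)$ is posetal by Proposition~\ref{prop:posetal}. Thus $t_\phi$ is terminal, the required objectwise terminal objects exist, and composing with $E$ gives the desired right adjoint.

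I expect the main obstacle to be the collapsing step of the second paragraph: translating the combination of zero dimensionality and $\cO(\bfY)^\Gamma=k$ into the assertion that the invariant map $\psi$ to Rosenlicht's base is genuinely constant on a single $k$-point, and confirming that Rosenlicht's output fiber is honestly an object of $\cC$ (geometrically reduced, defined over $k$, and transitive). By contrast, the terminality in the third paragraph is clean once orbit disjointness is invoked, and the promotion of objectwise terminal objects to an actual functor is purely formal.
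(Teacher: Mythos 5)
Your proof is correct and follows essentially the same route as the paper: reduce via $E$ to essential geometrizations, apply Rosenlicht's theorem, and use Proposition~\ref{prop:clopen} together with $\cO(\bfY)^\Gamma=k$ to force the geometrization into $\bfU$ and to collapse the induced $\Gamma$-invariant map to the Rosenlicht base to a single $k$-point. The one divergence is that you take the reduced fiber $\rho^{-1}(x_0)_{\mathrm{red}}$ as the transitive target, which is what leaves you worrying whether it is honestly an object of $\cC$; the paper instead observes that dominance of $\bfG\times\bfY\to\bfX$ together with the triviality of the $\bfG$-action on the Rosenlicht base forces the image in $\bfX''$ to be dense yet equal to the single point $x_0$, so $\bfX''\cong\Spec(k)$ and the relevant fiber is all of $\bfU$ --- an open subvariety of the geometrically reduced $\bfX$ --- which disposes of your flagged concern without any extra argument.
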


\begin{proof}
    In view of Proposition~\ref{prop:E}, it is enough to show that the inclusion of categories $\Geom^T(\bfY) \subseteq \Geom^E(\bfY)$
has a right adjoint functor.
For this, it is enough to show that for every $\bfX$ in $\Geom^E(\bfY)$ there exists a unique open $\bfG$-invariant dense subset $\bfU\subseteq \bfX$ such that $\bfU_{\bar k}$ is transitive and the map
$\bfY\to \bfX$ factors via the inclusion $\bfU\hookrightarrow \bfX$. 
The uniqueness is obvious by transitivity, since two open dense subsets have to intersect non-trivially.
We will show the existence. 

Let $\bfU\subseteq \bfX$ and $\phi:\bfU\to\bfX'$  be given by Rosenlicht Theorem, \ref{thm:ros}.
We let $e_{\bfU}$ be the idempotent corresponding to $\bfU$, given by Proposition~\ref{prop:clopen}.
Since it is $\Gamma$-invariant and 
$\cO(\bfY)^\Gamma=k$, we get that $e_{\bfU}$ equals either 0 or 1, and since the geometrization is essential, we conclude that $e_{\bfU}=1$.
Therefore, the geometrization $\bfY \to \bfX$ indeed factors via the inclusion $\bfU\hookrightarrow \bfX$. 
In particular, we get a map $\bfY \to \bfX'$.
By Proposition~\ref{pro:zdaffine} this map factors via the affine scheme $\Spec(\cO(\bfY))$.
Since the action of $\bfG$ on $\bfX'$ is trivial, the map $\Spec(\cO(\bfY))\to \bfX'$ factors via a map $\spec k=\spec (\cO(\bfY)^\Gamma) \to \bfX'$. The dominance of this map implies that  $\spec k \cong \bfX'$.
We conclude that $\bfU$ is a transitive variety.
This finishes the proof.
\end{proof}

\begin{proof}[Proof of Theorem~\ref{thm:erginit}]
We need to show that the category $\Geom(\bfY)$ has an initial object which is transitive.
In view of Proposition~\ref{prop:T}, it is enough to show that the category $\Geom^T(\bfY)$ has an initial object.
We note that this category has product, by Remark~\ref{rem:prod}, and it is a posetal category satisfying the descending chain condition, by Proposition~\ref{prop:posetal}.
By the descending chain condition it has a minimal object, that is an object such that every morphism pointing at it is an isomorphism, and by having products, such a minimal object must be initial.
\end{proof}

\section{Proof of the main results} \label{sec:proofs}

\subsection{Proof of Theorem~\ref{thm:E}} \label{ssec:thmE}
 We set $L=\cO(\bfY)$.
 We observe that $L$ is geometrically reduced. 
Indeed, this is a subalgebra of the product of local algebras $\prod_{y\in \bfY} L_y$, each of which is geometrically reduced by \cite[Lemma 035W]{SP}.
 By Proposition~\ref{pro:zdaffine}, $L$ is a zero dimensional algebra
and every geometrization of $\bfY$ factors uniquely via the canonical map $\bfY\to \Spec(L)$.
The equivalence of the first two bullets of Theorem~\ref{thm:E}, as well as the ``moreover" part follow.
The equivalence of the last two bullets follows from Lemma~\ref{lem:finitealg}.
Thus we are reduced to prove that $L$ is representable in $\cC$ iff $L^\Gamma$ is a finite sum of finite  separable field extensions of $k$.

\begin{proof}[Proof of the ``only if" part]
We assume $L$ is a geometrically reduced zero dimensional algebra which is representable in $\cC$ and argue to show that $L^\Gamma$ is a finite sum of finite field extensions of $k$.
We let $\Spec(L)\to \bfX$ be the initial object in the category of geometrizations of $\Spec(L)$.
We let $k<\bar{k}$ be an algebraically closed field extension.
We let $n$ be the number of absolute connected components of $\bfX$, that is the number of connected components of $\bfX_{\bar k}$.

We claim that $L^\Gamma$ has finitely many idempotents.
Assume by contradiction that $L^\Gamma$ has infinitely many idempotents.
Then we can find non-trivial orthogonal idempotents $e_0,\ldots,e_n\in L^\Gamma$, thus by Lemma~\ref{lem:disjoint},
$X=\bfX_0\cup \cdots \cup\bfX_n$ is a direct union of clopen subvarieties. 
This is a contradiction, thus indeed, $L$ has finitely many idempotent. 

Using Lemma~\ref{lem:disjoint} again, we are reduced to the case that $L^\Gamma$ has a single idempotent, which we now assume.
By Lemma~\ref{lem:LGamma}, $L^\Gamma$ is a geometrically reduced zero dimensional $k$-algebra,
and it follows that $L^\Gamma$ is a separable field extension of $k$.

We claim that the $L^\Gamma$ is an algebraic extension of $k$.
Assuming this is not the case, we fix $t\in L^\Gamma$ which is not algebraic over $k$ and consider $A=k[t]<k(t)<L^\Gamma$.
By the definition of $\bf X$, the geometrization $\Spec(L)\to \Spec(A)$ factors via a map $\bfX\to \Spec(A)$. 
Let $z$ be a $\bar k$-point in the image of the latter map. Let $p\in k[x]$ be the minimal polynomial of $z$ and denote $B=A[p^{-1}]<L$.
The corresponding geometrization $\Spec(L)\to \Spec(B)$ factors via a map $\bfX\to \Spec(B)$
and we conclude that the map $\bfX\to \Spec(A)$ factors via $\Spec(B)\to \Spec(A)$.
Since $z$ is not in the image of the latter, we get a contradiction.
We conclude that, indeed, $L^\Gamma$ is an algebraic extension of $k$.

To conclude the proof we need to show that $L^\Gamma$ is a finite extension of $k$.
In fact, we will show that $[L^\Gamma:k]\leq n$.
Otherwise, we may find a finite extension $k'<L^\Gamma$ with $[k':k]=m>n$
and consider the dominant geometrization $\Spec(L)\to \Spec(k')$
which factors via a dominant map $\bfX \to \Spec(k')$,
but $\spec(k')$ consists of $m$ $\bar k$-points, as $\bar k\otimes_k k'\cong \oplus_{i=1}^m \bar k$.
This contradiction finishes the proof.
\end{proof}

\begin{proof}[Proof of the ``if" part]
  We have that $L^\Gamma$ is a finite sum of finite separable field extensions of $k$,
$L^\Gamma \cong \oplus_{i=1}^n k_i$, with corresponding idempotents $e_i$.
We need to show that $L$ is representable in $\cC$.
We will consider the following case redaction.

\medskip
\noindent {\bf Case 1:} $L^\Gamma=k$.\\
    This case follows from Theorem~\ref{thm:erginit}.
    
\medskip
\noindent {\bf Case 2:} For every $i=1,\ldots,n$, $k_i\cong k$.\\
    Note that $e_i$ are idempotents in $L$ and $1=\sum e_i$.
    Denoting $L_i=e_iL$, we obtain a $\Gamma$-invariant decomposition $L=\bigoplus L_i$.
    Using Lemma~\ref{lem:disjoint} we are reduced to the previous case. 
    
\medskip
    \noindent {\bf Case 3:} $n=1$, that is $L^\Gamma$ is a field.\\
    Let $k'$ a finite Galois extension of $k$ containing $L^\Gamma$ and set $L'=k' \otimes_{k} L$. 
    By Proposition \ref{prop:des} it is enough to show that  $L'$ is $k'$-representable. 
    It is obvious that $L'$ is a geometrically reduced $k'$-algebra and it is zero dimensional by Lemma~\ref{lem:zdext}. By the $k'$-algebras isomorphisms $(L')^\Gamma\cong k' \otimes_{k} L^\Gamma\cong \oplus_{i=1}^{\dim_{k}L^\Gamma} k'$, we get a reduction to the previous case.
    
\medskip
    \noindent {\bf Case 4:} The general case.\\
    Follows from the previous case in the same way that Case 2 follows from Case 1.
\end{proof}

\subsection{Proof of Proposition~\ref{prop:field}}\label{ssec:ded}

We first note that $K$ is zero dimensional and geometrically reduced.
Indeed, it is a direct sum of the fields  of $k$-rational functions over the irreducible components of $\bfX_0$, and each of this is zero dimensional, as is a field, and it is geometrically reduced by applying \cite[Lemma 035W]{SP} to the generic point. 

Next, we claim that $K^G=k$.
We choose an algebraically closed field extension $k<\bar{k}$, consider the extension of scalars $(\bfX_0)_{\bar{k}}$ and the associated algebra of rational functions $K_1=\bar{k}((\bfX_0)_{\bar{k}})$.
Identifying $\bar{k} \otimes_k K$ as a subalgebra of $K_1$, we get the sequence of inclusions
\[ \bar{k} \subseteq \bar{k} \otimes_k K^G \subseteq (\bar{k} \otimes_k K)^G \subseteq K_1^G. \]
The domain of definition of every function in $K_1^G$ is an open $G$-invariant subset of $\bfX_0$,
thus it is the entire space, by the density of $G$ in $\bfG$ and transitivity.
We conclude that 
\[ K_1^G = \cO_{\bar{k}}(\bfX_0)^G = \cO_{\bar{k}}(\bfX_0)^{\bfG(\bar{k})}= \bar{k}, \] 
where the last two equations follow again by the density of $G$ in $\bfG$ and transitivity.
It follows that $\bar{k} = \bar{k} \otimes_k K^G$ and we conclude that indeed, $K^G=k$.

   By Theorem~\ref{thm:E} the functor $F_K$ is representable by a $\bf G$-variety $\bf X_1$, 
   thus we have an isomorphism of functors  $$\psi: \Mor_{\cC}(\bfX_1,\cdot)\to \Mor(\spec(K),\cdot).$$ 
   We denote $\phi:=\psi(\text{Id}_{\bfX_1})$ and get that $\psi$ is given by precomposition with $\phi$.
   
   So we have a $\bf G$-equivariant map $\nu:\bf X_1 \to \bf X_0$ such that the map $\spec(K) \to \bf X_0$ factors via $\bf X_1$. 
   Thus there exists an open $\bf U \subseteq \bf X_1$  s.t.
   \begin{itemize}
       \item $\phi$ gives an isomorphism of $k(U)$ with $K$.
       \item $\nu|_{\bf U}:\bf U\to \nu(\bf U)$ is an isomorphism.
   \end{itemize}
    Since $\bf X_0$ is 
   $\bf G$-transitive, we can assume that 
   $\nu(\bf U)=\bf X_1$. So $\nu$ admits a section $s$. 
   
   We have $s \circ \nu \circ \phi=\phi$. So the fact that $\psi$ is  a bijection proves that $s \circ \nu=\text{Id}_{\bfX_1}$. This implies that $\nu$  is an isomorphism, and we are done.

\section{Algebraic representations of ergodic actions} \label{sec:meas}

In this section we relate Theorem~\ref{thm:erginit} to some results explored in \cite{BF20} and \cite{BaderFurman}.
We first review some basic notions from measure theory.

	\begin{enumerate}
	\item By a Borel space we mean a set equipped with a choice of a sub-$\sigma$-algebra of its power set, whose elements we call Borel or measurable subsets.
		\item By a morphism, or a Borel map, of  Borel spaces we mean a map under which the preimage of Borel set is a Borel set.
		\item
		This gives a structure of a category on the collection of Borel spaces. We denote this category by $\mathcal B$.
		\item A Borel embedding is a Borel map which image is a Borel subset and it is an isomorphism onto its image.
		\item The category $\mathcal B$ has finite products, given by the product $\sigma$-algebra on the 
		set theoretical product, and fibered products which Borel embed into the corresponding products.
		\item \label{item:top} Endowing a topological space with the $\sigma$-algebra generated by its collection of open sets gives a functor $\Top\to \cB$. 
		This functor preserves finite products and finite fibered products of second countable topological spaces.
		\item 
		A measure on a Borel space is a $[0,\infty)$-valued $\sigma$-additive function defined on its $\sigma$-algebra of Borel subsets.
		The kernel of a measure is a $\sigma$-ideal. 
    	Two measures on a Borel space are said to be equivalent if they have the same kernels.
		\item  By a measured space we will mean a Borel space endowed with a choice of a $\sigma$-ideal
		of Borel subsets, elements of which will be called null-sets.
		The complement of a null set is said to be conull.
		An example of a measured space is obtained by choosing an equivalence class of measures on a Borel space.
		\item
		Given a measured space $\Omega$ and a Borel space $U$, 
		a measured function from $\Omega$ to $U$ is a choice of a conull subset $\Omega_0\subseteq \Omega$
		and a Borel map $\Omega_0\to U$.
		Two measured functions are said to be equivalent if they agree on conull subsets of their domains of definition.
		\item An equivalence class of measured functions is called a measured map.
		The set of all measured maps from the measured space $\Omega$ to the Borel space $U$
		is denoted $\Maps(\Omega,U)$.
		\item Given two measured spaces $\Omega$ and $\Omega'$,
		a measured function from $\Omega$ to $\Omega'$ is said to be a measured morphism if
		the preimage under any representing measured function of every null set of $\Omega'$ is a null set of $\Omega$.
		The set of all such measured morphism is denoted $\Mor(\Omega,\Omega')$.
		\item 
		This gives a structure of a category on the collection of measured spaces. We denote this category by $\mathcal M$.
		\item Even though every measured space has an underlying structure of a Borel space, forgetting the $\sigma$-ideal of null sets does not give a functor from $\mathcal B$ to $\mathcal M$, as it does not preserve morphisms. 
		However, $\Maps(\cdot,\cdot)$ gives a correspondence, or a pro-functor, from $\mathcal B$ to $\mathcal M$.
	\end{enumerate}

By a Borel ring we mean a ring object in the category of Borel spaces, that is a Borel space $k$ 
endowed with two Borel maps $k\times k \to k$ which satisfy the usual axioms of addition and multiplication. 
By item~(\ref{item:top}) above, examples of a Borel rings are obtained by considering topological rings which are second countable.

A Borel ring $k$ which underlying ring is a field
is said to be a Borel field if the first coordinate projection
\[ \{(x,y) \in k^2 \mid xy=1\} \to k, \quad (x,y) \mapsto x \]
is a Borel embedding, i.e an isomorphism onto $k^\times$.
Equivalently, this means that the inverse map is Borel on $k^\times$.

\begin{example}
Given a field $k$ and an absolute value on $k$ which gives rise to a complete and separable metric,
the corresponding Borel structure on $k$ makes it a Borel field.
\end{example}

\begin{prop} \label{prop:borelfunct}
    Given a Borel field $k$ there exists a unique functor from $k$-varieties to Borel spaces, $B:\cV \to \cB$, such that
    \begin{itemize}
        \item $B$ preserves products and fiber products.
        \item $B$ takes open immersions to Borel embeddings.
        \item The image under $B$ of $\mathbb{A}^1$ is the Borel space underlying $k$.
        \item The composition of $B$ with the forgetful functor $\cB \to \Sets$ is the functor of points.
    \end{itemize}
\end{prop}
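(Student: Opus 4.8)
The plan is to define $B$ on objects by the functor of points, $B(\bfX)=\bfX(k)$, exhibit a canonical $\sigma$-algebra on this set, and then observe that the four conditions force exactly this structure (uniqueness) while a direct check establishes the properties (existence). The one ingredient used throughout is that, since $k$ is a Borel ring and in fact a Borel field, addition, multiplication and inversion on $k^\times$ are Borel; consequently $k^\times$ and hence $\{0\}=k\setminus k^\times$ are Borel subsets of $k$, every polynomial map $k^n\to k^m$ is Borel, and every rational map is Borel on the (Borel) locus where its denominator is invertible.

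First I would treat the affine case. For $\bfX=\Spec(A)$ with $A$ generated by $a_1,\dots,a_n$ over $k$, the tuple of evaluations $\mathrm{ev}=(\mathrm{ev}_{a_1},\dots,\mathrm{ev}_{a_n})$ embeds $\bfX(k)$ into $k^n$ as the common zero set $Z$ of a finite set $f_1,\dots,f_m$ generating the defining ideal $I=\ker(k[x_1,\dots,x_n]\to A)$, and I equip $\bfX(k)$ with the subspace $\sigma$-algebra induced from the product $\sigma$-algebra on $k^n$. The key point for uniqueness is that, because $I$ is radical (as $A$ is reduced), one has $k[x_1,\dots,x_n]/(f_1,\dots,f_m)=A$, so $\bfX$ is \emph{literally} the scheme-theoretic fibre product $\mathbb{A}^n\times_{\mathbb{A}^m}\Spec(k)$ taken along $(f_1,\dots,f_m)$ and the origin; being $\bfX$, it lies in $\cV$ and is therefore also the fibre product in $\cV$. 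Hence any functor satisfying the hypotheses must send $\bfX$ to $k^n\times_{k^m}\{\mathrm{pt}\}$, which is precisely $Z$ with its subspace structure. This pins down the affine case and, in particular, shows the construction is independent of the chosen generators. That a morphism of affine varieties becomes Borel is immediate, since the corresponding $k$-algebra map carries generators to polynomials in the generators.

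Next I would globalize. Given an arbitrary variety $\bfX$, I choose a finite affine open cover $\{\bfU_i\}$ (possible since $\bfX$ is quasi-compact) and declare $S\subseteq\bfX(k)$ Borel exactly when each $S\cap\bfU_i(k)$ is Borel in $\bfU_i(k)$. The crucial compatibility is that for a distinguished open $D(f)\subseteq\bfU=\Spec(A)$ one has $D(f)(k)=\mathrm{ev}_f^{-1}(k^\times)$, a Borel subset, and the intrinsic structure of $D(f)(k)=\Spec(A_f)(k)$ agrees with the subspace structure, precisely because inversion is Borel on $k^\times$; using separatedness, so that the overlaps $\bfU_i\cap\bfU_j$ are affine, this shows the glued $\sigma$-algebra is independent of the cover and that every open immersion induces a Borel embedding. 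The four properties then follow by reduction to the affine case: $\mathbb{A}^1\mapsto k$ is immediate from the generator $x$; products and fibre products reduce, via affine covers, to the identity of point sets together with the fact that the tensor-product generators realize the (fibre) product $\sigma$-algebra as a subspace structure. Uniqueness globalizes as well, since open immersions are forced to be Borel embeddings, so each $\bfU_i(k)$ is forced to be Borel carrying the affine structure, and a $\sigma$-algebra is determined by its traces on a finite Borel cover.

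I expect the main obstacle to be this globalization step, namely verifying that open immersions yield Borel embeddings with matching subspace structure and that the glued $\sigma$-algebra is cover-independent. This is exactly where the Borel-field hypothesis is indispensable: without the Borel-ness of inversion on $k^\times$ the localizations $A_f$ would be uncontrolled, and the passage between an affine variety and its distinguished opens would break down.
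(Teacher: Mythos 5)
Your proof is correct and follows essentially the same route as the paper, which simply defers to the standard construction of Conrad (\cite[Propositions 2.1 and 3.1]{Conrad12}) for topological rings ``making the obvious adjustments'': define the structure on $\bfX(k)$ for affine $\bfX$ via a closed embedding into $\mathbb{A}^n$, then glue over a finite affine open cover, using Borel-ness of inversion on $k^\times$ to handle distinguished opens, with uniqueness forced by the product/fibre-product and open-immersion axioms. Your write-up is just a spelled-out version of that cited argument, and the details (independence of generators via the fibre-product presentation $\mathbb{A}^n\times_{\mathbb{A}^m}\Spec(k)$, cover-independence via separatedness) check out.
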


\begin{proof}
    The proof is fairly standard (once the precise statement is made). For concreteness, one can follow the proofs of \cite[Propositins~2.1 and 3.1]{Conrad12}
    which deal with topological rings, making the obvious adjustments.
\end{proof}

In view of the last bullet above, given a $k$-variety $\bfX$, $B(\bfX)$ is the set $\bfX(k)$ endowed with a certain Borel structure.
Therefore, we will simply use the term $\bfX(k)$ instead of $B(\bfX)$ when it is clear from the context that we regard it as a Borel space.

\begin{prop} \label{prop:finiteextborel}
    Given a Borel field $k$ and a finite field extension $k<k'$, there is a Borel structure on $k'$ making it a Borel field and $k\hookrightarrow k'$ a Borel embedding.
\end{prop}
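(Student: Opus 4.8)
The plan is to transport the Borel structure of $k$ to $k'$ via a choice of $k$-basis and then verify the field-object axioms directly. I would fix a $k$-basis $1 = b_1, b_2, \ldots, b_n$ of $k'$, where $n = [k':k]$, with the first basis vector equal to $1$, and use the $k$-linear bijection $k^n \to k'$, $(a_1,\ldots,a_n) \mapsto \sum_i a_i b_i$, to equip $k'$ with the product Borel structure of $k^n$. Addition on $k'$ is then coordinatewise addition, which is Borel because $k$ is a Borel ring. For multiplication, write $b_i b_j = \sum_l c_{ij}^l b_l$ with structure constants $c_{ij}^l \in k$; the $l$-th coordinate of the product of $x = \sum_i x_i b_i$ and $y = \sum_j y_j b_j$ is then $\sum_{i,j} c_{ij}^l x_i y_j$, an expression built from finitely many applications of the Borel addition and multiplication of $k$ to the coordinates of $x$ and $y$ and the fixed constants $c_{ij}^l$. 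Hence multiplication on $k'$ is Borel. The inclusion $k \hookrightarrow k'$ becomes $a \mapsto (a,0,\ldots,0)$, which is plainly an isomorphism onto the Borel subset $k \times \{0\}^{n-1}$, hence a Borel embedding.

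It then remains to check that $k'$ is a Borel field, i.e. that inversion is Borel on $(k')^\times$, and this is the heart of the matter. I would realize inversion through the regular representation: multiplication by $x$ is the $k$-linear endomorphism $M_x$ of $k'$, whose matrix in the chosen basis has entries that are, as above, Borel functions of the coordinates of $x$. The norm $N(x) = \det M_x$ is therefore a Borel function $k' \to k$ which, since $k'$ is a field, vanishes exactly at $0$; thus $(k')^\times = N^{-1}(k^\times)$ is Borel, using that $k^\times$ is a Borel subset of $k$. For $x \in (k')^\times$ one has $x^{-1} = M_x^{-1} e_1$, where $e_1$ is the coordinate vector of $1$, and Cramer's rule expresses each coordinate of $x^{-1}$ as a polynomial in the coordinates of $x$ divided by $N(x)$. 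Since $N(x) \in k^\times$ there and $k$ is a \emph{Borel field}, the map $x \mapsto N(x)^{-1}$ is Borel; multiplying the Borel polynomial numerators by it shows that $x \mapsto x^{-1}$ has Borel coordinates, as required.

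I expect the inversion step to be the only real obstacle, and it is precisely where the full Borel field hypothesis on $k$ is used rather than merely the Borel ring structure: everything else is polynomial, but the Cramer denominator $N(x)$ must be inverted inside $k$. Finally I would note that the resulting Borel structure is independent of the chosen basis, since two bases are related by a fixed invertible matrix over $k$, and the induced change of coordinates and its inverse are then fixed $k$-linear maps, hence Borel; this makes the construction canonical even though the statement only asserts existence.
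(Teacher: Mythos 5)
Your proof is correct, and it is the concrete, coordinate-level version of what the paper does abstractly. The paper itself remarks that the Borel structure on $k'$ "is nothing but the product Borel structure associated with an identification $k'\cong k^n$", but its actual proof avoids choosing a basis: it takes $\bfX$ to be the Weil restriction of scalars of $\mathbb{A}^1$ from $k'$ to $k$, applies the functor $B$ of Proposition~\ref{prop:borelfunct} to get the Borel ring structure on $k'=\bfX(k)$, and then deduces the Borel field property from the facts that $\mathbb{G}_m\to\mathbb{A}^1$ is an open immersion, that open immersions are preserved by Weil restriction, and that $B$ takes open immersions to Borel embeddings. Your route replaces all of this machinery with explicit computations: structure constants make addition and multiplication visibly polynomial, and --- this is the step the paper's abstract argument hides inside the open-immersion statement --- Cramer's rule reduces inversion in $k'$ to inverting the norm $N(x)=\det M_x$ in $k$, which is exactly where the Borel field hypothesis on $k$ (rather than just the Borel ring structure) enters. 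What the paper's approach buys is canonicity and reuse of Proposition~\ref{prop:borelfunct}; what yours buys is a self-contained elementary argument that makes the role of the hypothesis transparent, plus the (not strictly needed) observation that the structure is basis-independent. One small point in your favor: you also verify that $(k')^\times=N^{-1}(k^\times)$ is a Borel subset and that $k\hookrightarrow k'$ has Borel image $k\times\{0\}^{n-1}$ (using that $\{0\}=k\setminus k^\times$ is Borel since $k$ is a Borel field), details the paper's proof leaves implicit.
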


The Borel structure alluded to on $k'$ is nothing but the product Borel structure associated with an identification $k'\cong k^n$ as $k$-vector spaces. 
However, to avoid a choice we present this in the proof below in a more abstract way.

\begin{proof}
We let $\bfX$ be the Weil restriction of scalars of $\mathbb{A}^1$ from $k'$ to $k$, thus $\bfX(k)\cong k'$. 
Using Proposition~\ref{prop:borelfunct}, we get a Borel structure on $k'$ with respect to which the addition and multiplication maps are Borel. This makes $k'$ into a Borel ring.
Since $\mathbb{G}_m\to \mathbb{A}^1$ is an open immersion, open immersions are preserved by restriction of scalars (see \cite[Proposition 7.6/2]{NeronModels}) and $B$ takes open immersions to Borel embeddings, we get that $k'$ is a Borel field.
\end{proof}

Given a Borel field $k$, we get a functor from measured spaces to $k$-algebras, $\Omega \mapsto \Maps(\Omega,k)$.
We denote this functor by $L$, that is we set $L(\Omega)=\Maps(\Omega,k)$.
Composing with the spectrum functor we get a functor from measured spaces to schemes, $\Omega \mapsto \Spec(L(\Omega))$.
In fact, this functor takes values in geometrically reduced zero dimensional schemes.

\begin{lemma} \label{lem:Lgrzd}
For every $\Omega$ in $\cM$, $L(\Omega)$ is a geometrically reduced zero dimensional algebra and $\Spec(L(\Omega))$ is a geometrically reduced zero dimensional scheme.
\end{lemma}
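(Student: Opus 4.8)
The plan is to verify the two algebraic properties of $L(\Omega)=\Maps(\Omega,k)$ and then deduce the scheme-theoretic assertions formally: for an affine scheme $\Spec(A)$ the Krull dimension equals $\dim A$, and geometric reducedness of $\Spec(A)$ as a $k$-scheme is equivalent to geometric reducedness of $A$ as a $k$-algebra. Thus, once $L(\Omega)$ is shown to be a geometrically reduced zero dimensional $k$-algebra, the statement about $\Spec(L(\Omega))$ is immediate, and the entire content lies in analysing the algebra of $k$-valued measured maps.

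For zero dimensionality I would invoke Lemma~\ref{lem:zdidem} and exhibit a supporting idempotent for an arbitrary $f\in L(\Omega)$. The candidate is the indicator $e=\mathbf{1}_{\{f\neq 0\}}$. Here one uses the Borel field axioms: since $k^\times$ is a Borel subset of $k$ and inversion is Borel on $k^\times$, the set $\{f\neq 0\}=f^{-1}(k^\times)$ is measurable, so $e\in L(\Omega)$, and the pointwise inverse $g$ of $f$ on $\{f\neq 0\}$ (extended by $0$) is again a measured map. Then $efg=e$ shows $ef$ is a unit in $eL(\Omega)$, while $(1-e)f=0$ is nilpotent; hence $e$ is the supporting idempotent of $f$, and Lemma~\ref{lem:zdidem} yields zero dimensionality.

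Reducedness is direct: if $f^n=0$ in $L(\Omega)$ then $f(\omega)^n=0$ for a.e. $\omega$, and since $k$ is a field this forces $f=0$. For \emph{geometric} reducedness I would reduce to finite extensions. It suffices to show $L(\Omega)\otimes_k k'$ is reduced for every finite extension $k<k'$, since for an algebraic closure $\bar{k}$ of $k$ one has $L(\Omega)\otimes_k\bar{k}=\varinjlim L(\Omega)\otimes_k k'$ over the finite subextensions, and a filtered colimit of reduced rings is reduced. Given such a $k'$, Proposition~\ref{prop:finiteextborel} endows $k'$ with a Borel field structure for which $k\hookrightarrow k'$ is a Borel embedding, so $\Maps(\Omega,k')$ is defined and, by the same pointwise argument applied to the field $k'$, is reduced. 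Choosing a $k$-basis $a_1,\ldots,a_m$ of $k'$, the natural map $L(\Omega)\otimes_k k'\to\Maps(\Omega,k')$ sending $\sum_i f_i\otimes a_i$ to $\omega\mapsto\sum_i f_i(\omega)a_i$ is a ring homomorphism; it is injective, because $\sum_i f_i(\omega)a_i=0$ for a.e. $\omega$ forces each $f_i(\omega)=0$ a.e. by the $k$-linear independence of the $a_i$. Hence $L(\Omega)\otimes_k k'$ is a subring of a reduced ring, so reduced.

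The main point requiring care is the measurability bookkeeping in the first two steps: that $\{f\neq 0\}$, the pointwise inverse, and the expressions $\sum_i f_i(\omega)a_i$ all represent genuine morphisms in the measured category. These follow from the Borel field axioms together with Proposition~\ref{prop:borelfunct} and Proposition~\ref{prop:finiteextborel}, which guarantee that the arithmetic operations, inversion, and the inclusion $k\hookrightarrow k'$ are Borel; beyond this the argument is formal. I note that geometric reducedness genuinely requires the realization as honest $k'$-valued functions, since a zero dimensional reduced $k$-algebra need not be geometrically reduced when $k$ is imperfect.
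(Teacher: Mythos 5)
Your proof is correct and follows essentially the same route as the paper: zero dimensionality via the supporting idempotent $\mathbf{1}_{\{f\neq 0\}}$ and Lemma~\ref{lem:zdidem}, and geometric reducedness by reducing to finite extensions and embedding $L(\Omega)\otimes_k k'$ into $\Maps(\Omega,k')$ using Proposition~\ref{prop:finiteextborel}. The only (harmless) deviations are that you reduce to finite extensions via a filtered-colimit argument instead of citing the purely-inseparable criterion, and you prove only injectivity of $L(\Omega)\otimes_k k'\to\Maps(\Omega,k')$ where the paper asserts an isomorphism — injectivity indeed suffices.
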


\begin{proof}
The fact that $L(\Omega)$ is a zero dimensional algebra follows at once from Lemma~\ref{lem:zdidem}. Indeed, for a map in $L(\Omega)$,
the characteristic function of its support (which is well defined up to null sets) is a supporting idempotent.
The fact that it is reduced is immediate.
To see that it is geometrically reduced, it is enough by \cite[Lemma 030V]{SP} to show that $k'\otimes_k L$ is reduced for every finite inseparable extension $k<k'$.
But a finite extension $k'$ is a Borel field by Proposition~\ref{prop:finiteextborel}, and one sees easily that
$k' \otimes_k \Maps(\Omega,k) \cong \Maps(\Omega,k')$, which is indeed reduced.
\end{proof}

The main result of this section is the following.

\begin{theorem}\label{thm:geom.rep}
	Let $k$ be a Borel field.
	The functors $\Spec\circ L:\cM \to \Schemes$ and $B:\cV \to \cB$ considered above
	are adjoint in the following sense. 
	For every $\bfX$ in $\cV$ and $\Omega$ in $\cM$, there is a bijection
	\[ \Maps(\Omega,\bfX(k))\cong \Mor(\Spec(L(\Omega)),\bfX) \] 
	and this bijection depends functorially on both $\bfX$ and $\Omega$.	 
\end{theorem}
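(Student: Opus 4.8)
The plan is to establish the bijection first for affine $\bfX$ and then reduce the general case to the affine one, exploiting the zero-dimensional structure of $\Spec(L(\Omega))$ (valid by Lemma~\ref{lem:Lgrzd}) developed in Section~\ref{sec:geo-cat}. So suppose first that $\bfX=\Spec(A)$ is affine, and fix a presentation $A=k[x_1,\dots,x_n]/I$ as a finitely generated $k$-algebra, possible since $\bfX$ is of finite type. A scheme morphism $\Spec(L(\Omega))\to\bfX$ is the same datum as a $k$-algebra homomorphism $A\to L(\Omega)$, that is, an $n$-tuple $(f_1,\dots,f_n)$ of elements of $L(\Omega)=\Maps(\Omega,k)$ with $p(f_1,\dots,f_n)=0$ for all $p\in I$; as $I$ is finitely generated it suffices to impose this for finitely many generators $p_1,\dots,p_m$, and $p_j(f_1,\dots,f_n)=0$ in $L(\Omega)$ means exactly that $p_j(f_1(\omega),\dots,f_n(\omega))=0$ for almost every $\omega$. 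On the other side, Proposition~\ref{prop:borelfunct} identifies $\bfX(k)$ with the Borel subset of $k^n$ cut out by $p_1=\dots=p_m=0$, since $\bfX$ is the fiber over the origin of $(p_1,\dots,p_m)\colon\mathbb{A}^n\to\mathbb{A}^m$ and $B$ preserves fiber products; hence a measured map $\Omega\to\bfX(k)$ is precisely an $n$-tuple $(f_1,\dots,f_n)$ of elements of $\Maps(\Omega,k)$ with $(f_1(\omega),\dots,f_n(\omega))\in\bfX(k)$ almost everywhere. The two descriptions record the same data, so they are tautologically in bijection, manifestly independent of the chosen presentation and compatible with evaluation.

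For the general case, fix a finite affine open cover $\bfX=\bigcup_i\bfU_i$ and set $\bfZ_i=\bfU_i-\bigcup_{j<i}\bfU_j$, a constructible partition of $\bfX$ by affine subvarieties, as in the proof of Proposition~\ref{pro:zdaffine}; the inclusions $\bfZ_i(k)\hookrightarrow\bfX(k)$ are Borel embeddings and the $\bfZ_i(k)$ form a Borel partition of $\bfX(k)$, since $B$ sends open immersions to Borel embeddings and affine closed immersions are handled by the affine case. Given a scheme morphism $\phi\colon\Spec(L(\Omega))\to\bfX$, Proposition~\ref{prop:clopen} shows $\phi^{-1}(\bfZ_i)$ is clopen, corresponding to an idempotent $e_i\in L(\Omega)$, whence an orthogonal decomposition $L(\Omega)=\bigoplus_i e_iL(\Omega)$. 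Identifying idempotents of $\Maps(\Omega,k)$ with measurable subsets of $\Omega$ modulo null sets gives $e_iL(\Omega)\cong L(\Omega_i)$ for a measurable partition $\Omega=\bigsqcup_i\Omega_i$, and $\phi$ restricts to morphisms $\Spec(L(\Omega_i))\to\bfZ_i$ to which the affine case applies, yielding measured maps $\Omega_i\to\bfZ_i(k)\subseteq\bfX(k)$ that glue to a measured map $\Omega\to\bfX(k)$. Conversely, from a measured map $f\colon\Omega\to\bfX(k)$ the sets $\Omega_i=f^{-1}(\bfZ_i(k))$ form a measurable partition of $\Omega$; applying the affine case to each $f|_{\Omega_i}\colon\Omega_i\to\bfZ_i(k)$ gives morphisms $\Spec(L(\Omega_i))\to\bfZ_i\subseteq\bfX$ which glue along the clopen cover $\{\Spec(L(\Omega_i))\}$ of $\Spec(L(\Omega))$. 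These assignments are mutually inverse, and independence of the chosen cover follows by passing to a common refinement and invoking the affine case together with the uniqueness in Proposition~\ref{pro:zdaffine}.

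Finally, naturality is verified by tracing the constructions: a measured morphism $\Omega'\to\Omega$ induces $L(\Omega)\to L(\Omega')$ and hence $\Spec(L(\Omega'))\to\Spec(L(\Omega))$, compatibly with precomposition of measured maps, while a morphism $\bfX\to\bfX'$ acts by postcomposition on both sides; on the affine level both are immediate from the functoriality of the functor of points, and the non-affine case follows since the partitions refine compatibly. I expect the main obstacle to be the bookkeeping of this non-affine reduction, namely checking that the clopen/measurable decompositions are compatible with gluing and with change of cover; the measurability assertions are otherwise direct consequences of Proposition~\ref{prop:borelfunct}, and the affine statement is essentially the definition of $\Maps(\Omega,-)$ written in coordinates.
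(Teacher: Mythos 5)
Your proposal is correct and follows essentially the same route as the paper: both establish the affine case by identifying a morphism $\Spec(L(\Omega))\to\bfX$ and a measured map $\Omega\to\bfX(k)$ with the same tuple of coordinate functions in $\Maps(\Omega,k)$ satisfying the defining relations, and both handle the general case by an affine stratification $\bfZ_i=\bfU_i-\bigcup_{j<i}\bfU_j$, gluing along the induced clopen decomposition of $\Spec(L(\Omega))$ (via Proposition~\ref{prop:clopen}) on one side and the measurable partition $\Omega_i=f^{-1}(\bfZ_i(k))$ on the other, with well-definedness checked by passing to common refinements. The only cosmetic difference is that you phrase the affine bijection as a tautological identification of data rather than defining a pullback map and exhibiting its inverse, as the paper does.
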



\begin{proof}
We will construct a map $\phi:\Maps(\Omega,\bfX(k))\to \Mor(\Spec(L(\Omega)),\bfX)$ and show that it has the desired properties.
We will do it in several steps.

We first consider the case in which $\bfX$ is affine.
		In this case we identify $\Mor(\cO(\bfX) ,L(\Omega)) \cong  \Mor(\spec(L(\Omega)),\bfX)$,
		and define $\phi:\Maps(\Omega,\bfX(k))\to \Mor(\cO(\bfX) ,L(\Omega))$
		to be the obvious pull back.
		The functoriality of $\phi$ is obvious, and we are left to construct an inverse map. 
		We chose a closed embedding $\bfX\subseteq \bA^n$. Each coordinate map $\bA^n\to \bA^1$ gives a map 
		\[ \Mor(\cO(\bfX) ,L(\Omega)) \to \Mor(\cO(\bA^1) ,L(\Omega)) \cong L(\Omega) = \Maps(\Omega,k).\]
		Combining the coordinate maps, we get  
		\[ \Mor(\cO(\bfX) ,L(\Omega)) \to \Maps(\Omega,k)^n \cong \Maps(\Omega,k^n),\]
		which image is in $\Mor(\Omega,\bfX(k))\subseteq \Mor(\Omega,k^n)$.
		This gives a map 
		\[\Mor(\cO(\bfX) ,L(\Omega)) \to \Maps(\Omega,\bfX(k)) \]
		which is easily seen to be an inverse to $\phi$.
		This completes the proof of the theorem in case $\bfX$ is affine.
		
		We now turn to the construction of $\phi$ in the general case.
		Fix $\alpha \in \Maps(\Omega,\bfX(k))$ and a Borel representative $\alpha_0:\Omega_0 \to \bfX(k)$.
		Chose an affine stratification $\bfX=\bigcup_{i=1}^n \bfX_i$ and set $\Omega_i= \alpha_0^{-1}(\bfX_i(k))$ and $\alpha_i=\alpha_0|_{\Omega_i}:\Omega_i \to \bfX_i(k)$.  
		By the already proven affine case, we get corresponding morphisms $\beta_i=\phi(\alpha_i): \spec(L(\Omega_i)) \to \bfX_i$. Note that $\spec(L(\Omega_i))$ is naturally identified with  a clopen subset of $\spec(L(\Omega))$, and moreover $\spec(L(\Omega_i))$ form a disconnected cover of $\spec(L(\Omega))$. Thus the collection $\{\beta_i\}$ gives a map $\beta:\spec(L(\Omega)) \to \bfX$.
		
		We would like to define $\phi(\alpha)=\beta.$ For this we need to show that $\beta$ does not depend on the choices of $\alpha_0$ and the stratification $\bfX=\bigcup_{i=1}^n \bfX_i$. 
		The independence on the choice of $\alpha_0$ is clear.
		Given two stratifications as above, since they have a joint refinement, we may assume that one of them refines the other. Thus we may reduce to the case when $\bfX$ is affine and one of the stratifications is trivial. This indeed follows from the functoriality of $\phi$ in the affine setting and we are thus done constructing $\phi$.
		
		The functoriality of $\phi$ with respect to $\Omega$ is obvious. 
		We now argue to show its functoriality with respect to $\bfX$.
		Fixing $\alpha \in \Maps(\Omega,\bfX(k))$ and $\nu:\bfX\to  \bfY$ a morphism in $\cC$, we need to show that the following diagram is commutative:
		\begin{equation}\label{eq:fun}
			\xymatrix{
				\spec(L(\Omega))\ar[rr]^{\quad \quad \,\,\phi(\alpha)}  \ar[drr]_{\phi(\nu(k)\circ \alpha)} &&\bfX \ar[d]^{\nu}\\
				& & \bfY}.	  
		\end{equation}
		Choose an affine stratification $\bfY=\bigcup_{i=1}^n \bfY_i$ and, for each $i$, chose an  affine stratification $\nu^{-1}(\bfY_i)=\bigcup_{j=1}^{n_i} \bfX_{ij}$.
		We get an affine stratification $\bfX=\bigcup_{i,j} \bfX_{ij}$.
		Using these stratifications for the construction of $\phi(\nu(k)\circ \alpha)$ above and $\phi(\alpha)$ correspondingly, the commutativity of diagram \eqref{eq:fun} follows the functoriality in the affine case.

	We are left to show that $\phi$ is bijective.
		We will chose an affine stratification $\bfX=\bigcup_{i=1}^n \bfX_i$ and construct a map $\psi:\Mor(\spec(L(\Omega)),\bfX) \to \Maps(\Omega,\bfX(k))$. 
		This construction will depend a priori on the chosen stratification,
		however, we will show that it is an inverse to $\phi$, thus a posteriori it will not.
		Fixing $\beta \in \Mor(\spec(L(\Omega)),\bfX)$, we will construct $\psi(\beta)$.
		We set $\bf Z_i=\beta^{-1}(X_i)$. By Proposition~\ref{prop:clopen}, $\bf Z_i$ is clopen in $\spec(L(\Omega))$, thus $\spec(L(\Omega))=\bigcup \bf Z_i$ is a clopen disjoint cover. This gives a direct sum decomposition $L(\Omega)=\bigoplus \cO(\bfZ_i)$, and a corresponding decomposition of the unity in $L(\Omega)$ as a sum of idempotents, $1=\sum e_i$, see Remark~\ref{rem:projmeas}. Each $e_i$ is the characteristic function of some set $\Omega_i\subseteq \Omega$ (which is well defined only up to a measure $0$ set) and we have $\Spec(L(\Omega_i))=\bfZ_i$.
		We set $\beta_i:=\beta|_{\bfZ_i}$ and consider it as a morphism from $\Spec(L(\Omega_i))$ to $\bfX_i$. 
		Using the already proven bijectivity of $\phi$ in the affine case, we set $\alpha_i=\phi^{-1}(\beta_i)$ and define $\alpha: \Omega\to \bfX(k)$ to be the unique (up to equivalence) map satisfying $\alpha|_{\Omega_i}=\alpha_i$. Finally we set $\psi(\beta)=\alpha$. 
		Using the same stratification $\bfX=\bigcup_{i=1}^n \bfX_i$ for the construction of $\phi(\alpha)$, it is easy to see that $\phi(\alpha)=\beta$.
		We have thus constructed a map $\psi$ which is inverse to $\phi$.
		We conclude that $\phi$ is indeed bijective. This completes the proof.
\end{proof}

We now fix a group $\Gamma$, a measured space $\Omega$ and a group homomorphism $\theta:\Gamma \to\Aut_\cM(\Omega)$.
We also fix a Borel filed $k$, a $k$-algebraic group $\bfG$
and a group homomorphism $\rho:\Gamma\to \bfG(k)$.
We consider the category $\mathcal{C}$ of 
$k$-varieties (see Definition~\ref{def:variety}) 
on which $\bfG$ acts $k$-morphically
and their $\bfG$-equivariant $k$-morphisms.

\begin{defn} [{cf. \cite[Definition~4.1]{BF20}}]
An algebraic representation of $\Omega$ is a pair $(\bfX,f)$ where $\bfX$ is a variety in $\cC$
and $f$ is a measured map in $\Maps(\Omega,\bfX(k))$ which is $\Gamma$ equivariant, that is for every $\gamma\in \Gamma$, $f\circ \theta(\gamma)=\rho(\gamma)\circ f$ in $\Maps(\Omega,\bfX(k))$.

A morphism between algebraic representations $(\bfX,f)$ and $(\bfX',f')$ is a morphism in $\cC$, $\alpha:\bfX\to \bfX'$, such that $f'=\alpha(k)\circ f$ in $\Maps(\Omega,\bfX'(k))$,
that is the diagram 
		$$\xymatrix{
			\Omega\ar[r]^{f}  \ar[dr]_{f'} &\bfX(k) \ar[d]^{\alpha(k)}\\
			&\bfX'(k)}.$$
		is commutative. 	
\end{defn} 

We observe that the collection of representation of $\Omega$ forms a category.
Theorem~\ref{thm:geom.rep} gives the following.

 \begin{cor}
 	the category of algebraic representations of $\Omega$ is equivalent to the category of equivariant geometrizations of $\spec(L(\Omega))$.
 \end{cor}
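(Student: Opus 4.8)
The plan is to upgrade the object-level bijection of Theorem~\ref{thm:geom.rep} to an equivalence (in fact an isomorphism) of categories by checking that, under that bijection, the two equivariance conditions correspond. Write $\Phi_{\bfX,\Omega}\colon\Maps(\Omega,\bfX(k))\to\Mor(\Spec(L(\Omega)),\bfX)$ for the bijection of Theorem~\ref{thm:geom.rep}, natural in $\bfX\in\cV$ and in $\Omega\in\cM$, and put $\phi_f:=\Phi_{\bfX,\Omega}(f)$. Here the relevant $\Gamma$-action on $\bfY:=\Spec(L(\Omega))$ is the one induced from $\theta$ through the covariant functor $\Spec\circ L$, so that $a^{\bfY}_\gamma:=(\Spec\circ L)(\theta(\gamma))$, while the $\Gamma$-action on a variety $\bfX$ in $\cC$ is induced from $\rho$, with $a^{\bfX}_\gamma\colon\bfX\to\bfX$ the automorphism of $\bfX$ given by the action of $\rho(\gamma)\in\bfG(k)$, so that $a^{\bfX}_\gamma(k)=\rho(\gamma)$ on $\bfX(k)$. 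I would then define a functor from the category of algebraic representations of $\Omega$ to $\Geom(\Spec(L(\Omega)))$ sending $(\bfX,f)$ to the geometrization $\phi_f$ and acting as the identity on the underlying $\bfG$-equivariant morphisms.

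The main step is to show that $f$ is $\Gamma$-equivariant if and only if $\phi_f$ is. Applying $\Phi_{\bfX,\Omega}$ to the defining identity $f\circ\theta(\gamma)=\rho(\gamma)\circ f$, I would use naturality in $\Omega$ along the automorphism $\theta(\gamma)\colon\Omega\to\Omega$ to rewrite $\Phi_{\bfX,\Omega}(f\circ\theta(\gamma))=\phi_f\circ a^{\bfY}_\gamma$, and naturality in $\bfX$ along $a^{\bfX}_\gamma\colon\bfX\to\bfX$ to rewrite $\Phi_{\bfX,\Omega}(\rho(\gamma)\circ f)=a^{\bfX}_\gamma\circ\phi_f$. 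Since $\Phi_{\bfX,\Omega}$ is a bijection, the identity $f\circ\theta(\gamma)=\rho(\gamma)\circ f$ holds for all $\gamma$ if and only if $\phi_f\circ a^{\bfY}_\gamma=a^{\bfX}_\gamma\circ\phi_f$ holds for all $\gamma$, which is precisely the $\Gamma$-equivariance of $\phi_f$. Hence $\Phi_{\bfX,\Omega}$ restricts to a bijection between $\Gamma$-equivariant maps $f$ and equivariant geometrizations $\phi$, so the proposed functor is bijective on objects.

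For morphisms only naturality in $\bfX$ is needed: for a $\bfG$-equivariant $\alpha\colon\bfX\to\bfX'$ in $\cC$, naturality gives $\Phi_{\bfX',\Omega}(\alpha(k)\circ f)=\alpha\circ\phi_f$, so the condition $f'=\alpha(k)\circ f$ defining a morphism of algebraic representations is equivalent to $\phi_{f'}=\alpha\circ\phi_f$ defining a morphism of geometrizations. Thus the two hom-sets coincide as subsets of the $\bfG$-equivariant morphisms $\bfX\to\bfX'$, the functor is fully faithful, and being bijective on objects it is an isomorphism of categories, a fortiori an equivalence.

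I expect the only real subtlety to be bookkeeping rather than depth. The delicate point is that the naturality in $\bfX$ of Theorem~\ref{thm:geom.rep} must be applied to the translation $a^{\bfX}_\gamma$, which is a morphism in the ambient category $\cV$ of all varieties but is generally \emph{not} $\bfG$-equivariant, hence not a morphism of $\cC$; it is exactly the full naturality over $\cV$ (and not merely over $\cC$) that makes the two equivariance conditions line up. One should also record that $\Spec\circ L$ is covariant, so that $a^{\bfY}_\gamma=(\Spec\circ L)(\theta(\gamma))$ genuinely defines the $\Gamma$-action on $\Spec(L(\Omega))$ and is compatible with the (contravariant) direction of naturality in $\Omega$.
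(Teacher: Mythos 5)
Your argument is correct and is exactly the route the paper intends: the corollary is stated as an immediate consequence of Theorem~\ref{thm:geom.rep}, and your write-up simply makes explicit the naturality bookkeeping (in $\Omega$ along $\theta(\gamma)$ and in $\bfX$ along translation by $\rho(\gamma)$, the latter over all of $\cV$ rather than $\cC$) that the paper leaves implicit. No gaps.
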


We say that $\Omega$ is $\Gamma$-ergodic if $L(\Omega)^\Gamma$ consists only of the constant maps
$\Omega\to k$, that is $L(\Omega)^\Gamma\cong k$.
By Lemma~\ref{lem:Lgrzd} and Theorem~\ref{thm:erginit} we get the following.

\begin{cor}[{cf. \cite[Theorem~4.3]{BF20}}]
If $\Omega$ is $\Gamma$-ergodic then the category of algebraic representations of $\Omega$ has an initial object which is transitive.
\end{cor}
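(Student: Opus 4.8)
The plan is to reduce the statement directly to Theorem~\ref{thm:erginit} by transporting its conclusion across the equivalence of categories recorded in the corollary immediately preceding the statement. First I would set $\bfY = \Spec(L(\Omega))$ and observe that $\cO(\bfY) = L(\Omega)$, since the global regular functions on an affine scheme are precisely its coordinate ring. By Lemma~\ref{lem:Lgrzd}, $\bfY$ is a geometrically reduced zero dimensional scheme, so the structural hypotheses of Theorem~\ref{thm:erginit} concerning $\bfY$ are satisfied without further work.

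Next I would unpack the ergodicity assumption. By definition, $\Omega$ being $\Gamma$-ergodic means $L(\Omega)^\Gamma \cong k$, and hence $\cO(\bfY)^\Gamma = L(\Omega)^\Gamma = k$. This is exactly the remaining hypothesis of Theorem~\ref{thm:erginit}, which therefore applies and tells us that $F_\bfY$ is represented by a transitive variety in $\cC$. In view of the observation preceding Lemma~\ref{lem:disjoint}, representability of $F_\bfY$ by a variety $\bfX$ is the same as saying that the category $\Geom(\bfY) = \Geom(\Spec(L(\Omega)))$ of equivariant geometrizations has an initial object, realized here by a \emph{transitive} $\bfG$-variety $\bfX$.

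Finally I would invoke the corollary just above, which identifies the category of algebraic representations of $\Omega$ with $\Geom(\Spec(L(\Omega)))$; since an equivalence of categories carries initial objects to initial objects, the category of algebraic representations inherits an initial object. The one point deserving care, which I expect to be the main (and rather mild) obstacle, is checking that this equivalence respects transitivity: under the identification coming from Theorem~\ref{thm:geom.rep}, an algebraic representation $(\bfX, f)$ corresponds to an equivariant geometrization with the very same underlying variety $\bfX$ in $\cC$, so transitivity of $\bfX$ is a property of the same object on both sides and is automatically preserved. With this bookkeeping in place — matching $\cO(\Spec(L(\Omega)))$ with $L(\Omega)$ and noting that the equivalence is the identity on the variety coordinate — the conclusion follows, and no new geometric input beyond Theorem~\ref{thm:erginit} is required.
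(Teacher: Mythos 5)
Your proposal is correct and follows exactly the route the paper takes: it deduces the statement from Lemma~\ref{lem:Lgrzd} and Theorem~\ref{thm:erginit}, transporting the transitive initial object across the equivalence of categories given by the preceding corollary. The extra care you take in checking that transitivity is preserved under this equivalence is a harmless elaboration of what the paper leaves implicit.
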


\bibliographystyle{alpha}
\bibliography{Ramibib}
\end{document}